\numberwithin{equation}{section}
\newtheorem{theorem}{Theorem}[section]
\newtheorem{lemma}[theorem]{Lemma}
\newtheorem{proposition}[theorem]{Proposition}
\newtheorem{corollary}[theorem]{Corollary}
\theoremstyle{definition}
\newtheorem{definition}[theorem]{Definition}
\newtheorem{remark}[theorem]{Remark}
\newcommand\C{\mathbb{C}}
\newcommand\D{\mathbb{D}}
\newcommand{\N}{{\mathcal N}}
\def\theta{\vartheta}
\def\S{{\mathbb{S}}}
\def\D{\mathbb{D}}
\def\Cc{\widehat{{\C}}}
\def\ol{\overline}
\begin{document}

\title{Combinatorial properties of Newton maps}

\author[Lodge]{Russell Lodge}
\email{russell.lodge@indstate.edu}
\address{Department of Mathematics and Computer Science, Indiana State University, 200 North Seventh Street, Terre Haute, IN 47809, USA}

\author[Mikulich]{Yauhen Mikulich}
\email{y.mikulich@gmail.com}
\address{All\'ee Leotherius 2, 1196 Gland, Switzerland}

\author[Schleicher]{Dierk Schleicher}
\email{dierk.schleicher@univ-amu.fr}
\address{Aix–Marseille Université, Institut de Mathématiques de Marseille, 163 Avenue de Luminy, 13009 Marseille, France}

\subjclass[2010]{Primary 30D05, 37F10, 37F20}
\thanks{This research was partially supported by the Deutsche Forschungsgemeinschaft (DFG), as well as the advanced grant 695621 “HOLOGRAM” of the European Research Council (ERC), which is gratefully acknowledged. The authors would also like to thank Johannes R\"uckert and Kostiantyn Drach for useful conversations, and the anonymous referees for helpful suggestions on an earlier version of this paper. }

\begin{abstract}
This paper constructs a combinatorial model for all postcritically finite rational maps arising as the Newton's method of a complex polynomial.  This model is used in \cite{LMS2} to give a combinatorial classification of postcritically finite Newton maps of any degree.  
\end{abstract}

\date{\today}
\maketitle

\tableofcontents

\section{Introduction} \label{Sec_Overview}

The dynamical properties of rational functions $f:\widehat{\mathbb{C}}\rightarrow\widehat{\mathbb{C}}$ have been intensely scrutinized over the last few decades, though in some ways the remarkable theory which has emerged is only in its early stages.  Natural motivation for the topic comes from the study of Newton's root finding method applied to a complex polynomial.  For instance, it has long been observed that in some cases Newton's method does not converge to a root for open sets of initial values in $\mathbb{C}$; Smale posed the problem of ``systematically finding" those polynomials whose Newton's method have such open sets \cite[Problem 6]{Smale85}.  In a different vein, a number of studies have been carried out on Newton's method as a root-finding method \cite{McM,HSS,schleicher_2002,Schl,Sch08,BLS,SchSt,BAS,SCRSSS20}.

Finite combinatorial models have been successfully created to encode the dynamics of postcritically finite complex polynomials \cite{BFH,Poirier}, but similar attempts for rational maps have met with formidable difficulties (postcritically finite maps are chosen for study because they are structurally significant in parameter space, and because Thurston's characterization and rigidity theorem is available).  This paper will produce a combinatorial invariant that will yield a classification of \emph{all} postcritically finite Newton maps worked out in \cite{LMS2}.  No other combinatorial classification of this scope exists for non-polynomial rational maps, as explicit classifications have only been made in the past for one-dimensional families. 

\begin{definition}[Newton map]\label{defn:NewtonMap}
A rational function $f:\widehat{\C}\to\widehat{\C}$ of degree $d\geq 3$ is called a \emph{Newton map} if there is some complex polynomial $p(z)$ so that $f(z)=z-\frac{p(z)}{p'(z)}$ for all $z\in\mathbb{C}$.
\end{definition}

The \emph{Newton map of $p$} is given by $N_p(z)=z-\frac{p(z)}{p'(z)}$, and it should be observed that $N_p$ arises naturally when Newton's method is applied to find the roots of $p$.  The cases $d<3$ are excluded in this paper because they are trivial.  Each root of $p$ is an attracting fixed point of $N_p$, and the point at infinity is a repelling fixed point of $N_p$.  The degree $d$ coincides with the number of distinct roots of $p$.  If $N_p$ is postcritically finite, the finite fixed points of $N_p$ must be superattracting, which implies that all roots of $p$ are simple.

In this paper, we construct a finite forward invariant graph for $N_p$ called an \emph{extended Newton graph}.  We then give an axiomatic definition of the class of graphs called ``abstract extended Newton graphs" (see Definition \ref{Def_AbstractExtNewtGraph}) and show that our graphs satisfy these axioms.  In \cite{LMS2} we show the converse: every abstract extended Newton graph comes from a postcritically finite Newton map.  This leads to a combinatorial classification of postcritically finite Newton maps up to affine conjugacy in terms of abstract extended Newton graphs with an appropriate equivalence relation.   Foundational to both articles will be the results in \cite{MRS} which gives a classification of all postcritically \emph{fixed} Newton maps, namely those Newton maps whose critical points are all mapped onto fixed points after finitely many iterations. Even though postcritically fixed Newton maps were the immediate concern, it must be emphasized that many results in \cite{MRS} were formulated for the more general class of ``attracting critically finite Newton maps'' which includes all postcritically finite Newton maps. This was done in anticipation of the present work.

We give a brief overview of the graph invariant that will be used to classify postcritically \emph{finite} Newton maps in \cite{LMS2}. If $N_p$ is a postcritically finite Newton map, then as in \cite{MRS}, we define the channel diagram $\Delta$ of $N_p$ to be the union of the accesses from finite fixed points of $N_p$ to $\infty$ (see Section \ref{Sec_NewtonGraph}).   Next, using the notation $N_p^{n}:=N_p^{\circ n}$, the Newton graph of level $n$ is constructed to be the connected component of $N_p^{-n}(\Delta)$ containing $\infty$ and is denoted by $\Delta_n$.  For a sufficiently high level $n$, the Newton graph captures the behavior of critical points mapping onto fixed points. See Figure \ref{Fig_PreFixedExample} for an example of a Newton graph of level one.

We call a critical point \emph{free} if it is not contained in the Newton graph $\Delta_n$ for any level $n$; put differently, a critical point is free if its forward orbit does not contain a fixed point.  See Figure \ref{Fig_Deg4SmallBasilicas} for an example of a Newton map with free critical points. We now describe the combinatorial objects that capture the behavior of free critical points.

\begin{figure}[h]
\centerline{\includegraphics[width=135mm]{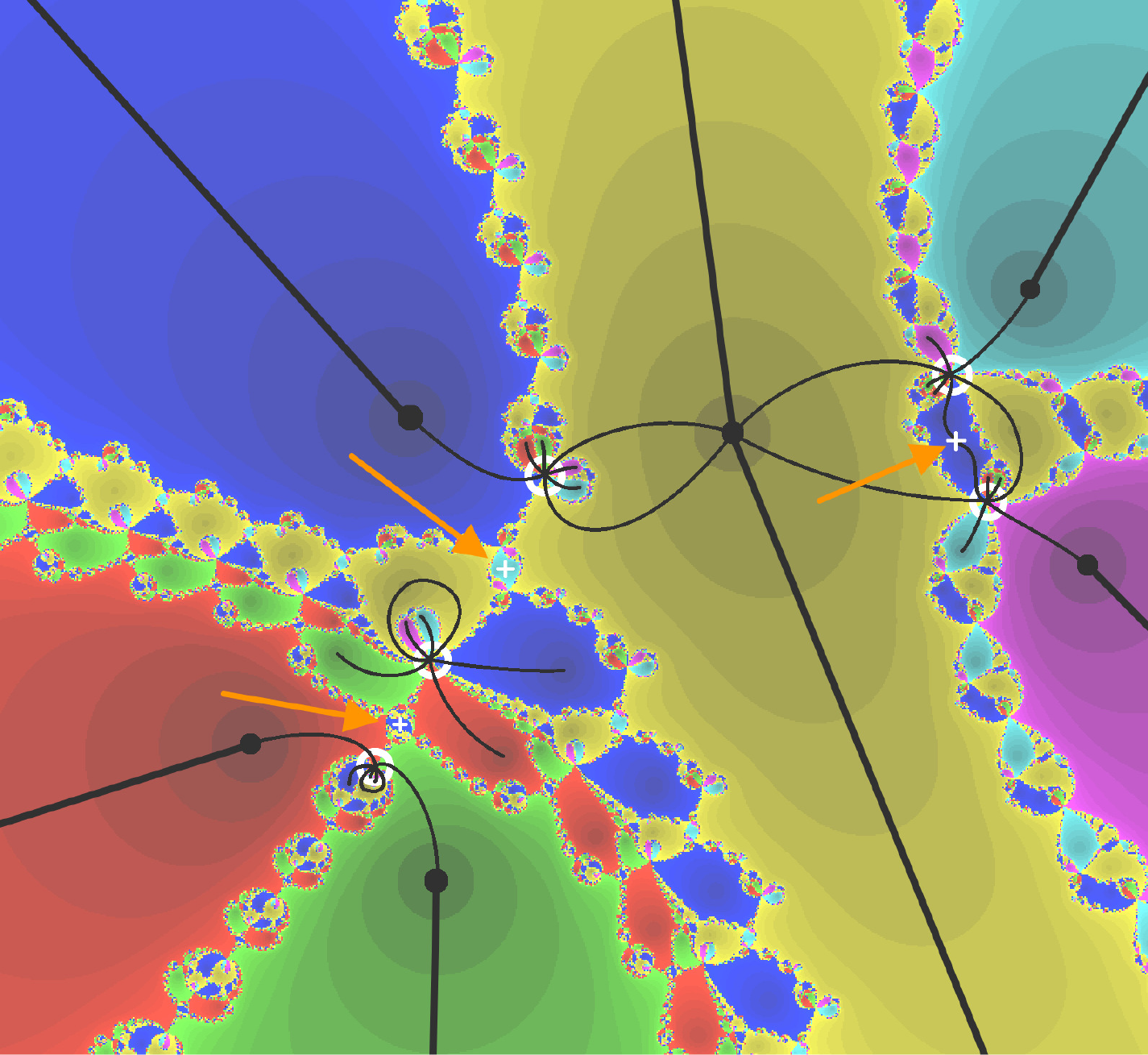}}
\caption{Dynamical plane of a degree 6 Newton map $N_p$. Roots are indicated by black dots. The channel diagram $\Delta$ is drawn with thick black curves, and $N_p^{-1}(\Delta)\setminus\Delta$ is drawn with thin black curves. Poles are indicated by white circles, where one pole clearly does not lie on the boundary of the immediate basin of a root. The Newton graph of level one $\Delta_1$ is the component of $N_p^{-1}(\Delta)$ that does not intersect this special pole.  There are three non-fixed critical points that are simple, each indicated by a white ``+'' and an orange arrow. There are no free critical points.}
\label{Fig_PreFixedExample}
\end{figure}

For each periodic postcritical point of $N_p$ having period greater than one (i.e.\ a periodic postcritical point that isn't $\infty$ or a root of $p$), we use the renormalization result of \cite{DLSS} to produce a local model using extended Hubbard trees.  To capture the behavior of critical points that map into a Hubbard tree after some number of iterates, appropriate preimages are taken of these Hubbard trees.

Thus far, all postcritical points are contained in either the Newton graph or one of the Hubbard tree (preimages), but the Hubbard trees are disjoint from the Newton graph.  To remedy this, ``Newton rays" are used to connect certain repelling periodic points on the extended Hubbard trees to the Newton graph (see Section \ref{Sec_Newtrays}).  Each Newton ray is either an internal ray in the immediate basin of a root, or is comprised of infinitely many preimages of edges of the Newton graph. 

\begin{figure}[h]
\centerline{\includegraphics[width=130mm]{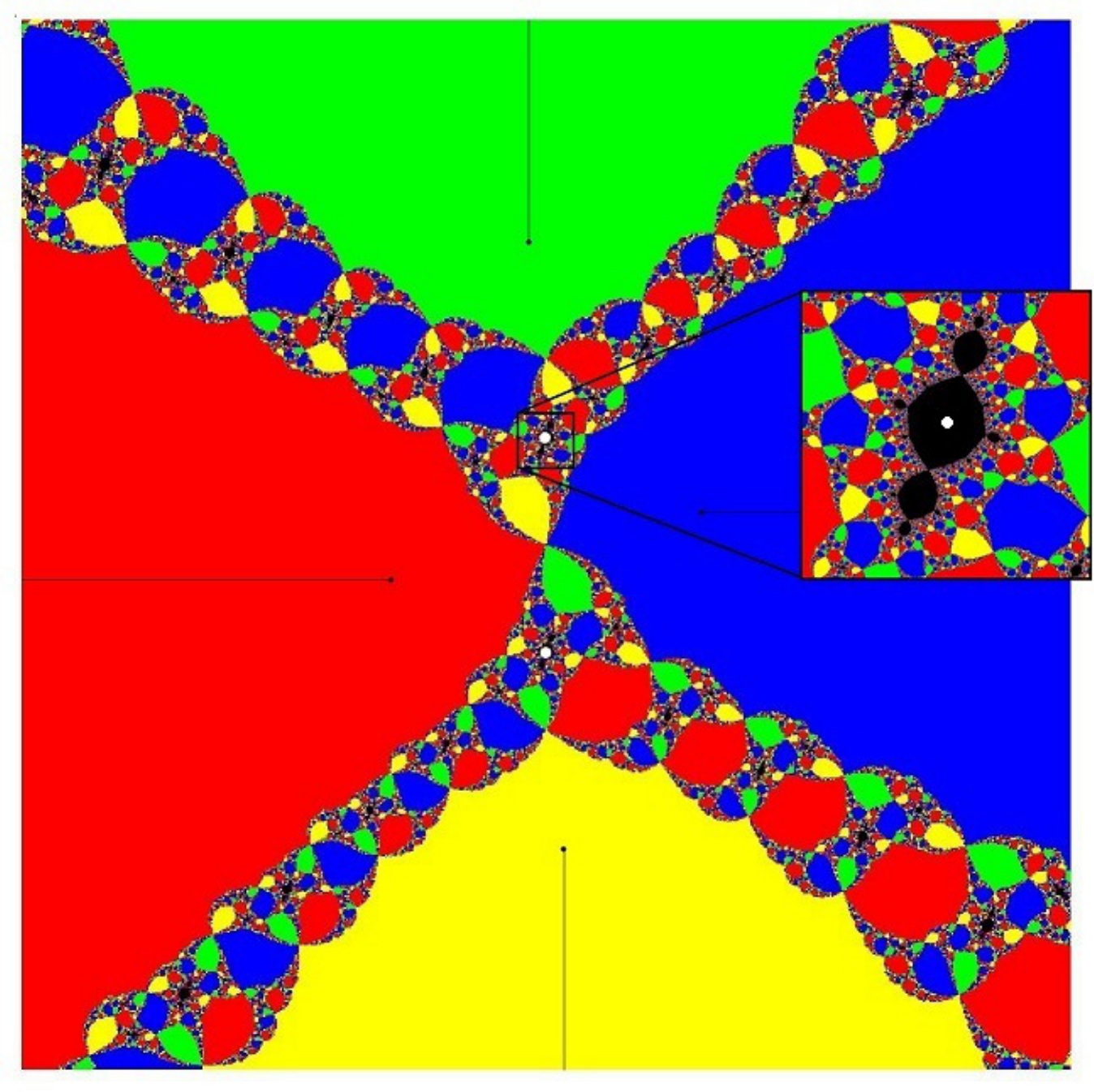}}
\caption{Part of the dynamical plane of the Newton map of degree 4 (with an inset zoom) for the monic polynomial with roots given approximately by $\pm(.593+.130i)$ and $\pm(-.0665+1.157i)$.  The roots are drawn as four black dots, and the channel diagram is indicated by the thin black lines.  The two white dots represent free critical points at $\pm.408i$, and they lie in a common four-cycle contained in the two black ``little basilicas''. The basin of these free critical points is an open set of starting points of Newton's method that do not converge to roots (image by K. Mamayusupov).}
\label{Fig_Deg4SmallBasilicas}
\end{figure}

Now the \emph{extended Newton graph}, denoted $\Delta^*_\N$, can be produced for $N_p$.  It is a finite graph composed of:
\begin{itemize}
\item the Newton graph
\item the Hubbard tree pieces for each free critical point of $N_p$ 
\item Newton rays connecting each Hubbard tree piece to the Newton graph.  
\end{itemize}
Restriction of $N_p$ to $\Delta^*_\N$ yields a self map, and the graph together with this self map is denoted $(\Delta^*_\N,N_p)$.

The axioms for an \emph{abstract extended Newton graph} are given in Definition \ref{Def_AbstractExtNewtGraph}, and the following theorem is proved.
\begin{theorem}[Newton maps generate extended Newton graphs] \label{Thm_NewtMapsGenerateExtNewtGraphs}
For any extended Newton graph $\Delta^*_\N\subset\widehat{\mathbb{C}}$ associated to a postcritically finite Newton map $N_p$, the pair $(\Delta^*_\N,N_p)$ satisfies the axioms of an abstract extended Newton graph. 
\end{theorem}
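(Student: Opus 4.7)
The plan is to verify the axioms of Definition \ref{Def_AbstractExtNewtGraph} one component at a time, following the tripartite structure of $\Delta^*_N$: the Newton graph $\Delta_n$, the extended Hubbard tree pieces associated to the free postcritical cycles, and the Newton rays that glue them together. Forward invariance and the combinatorial axioms are local in the sense that each axiom restricts to a statement about one of these pieces (plus a compatibility statement at the gluing points), so the task naturally splits along these lines. A fixed sufficiently large level $n$ must be chosen at the outset so that $\Delta_n$ already contains all fixed postcritical points of $N_p$ together with appropriate preimages; I would quote the level bound from \cite{MRS} for this.

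For the Newton graph portion, the axiomatic properties (connectedness, forward invariance of $\Delta_n$, correct local degree along edges, the structure of the channel diagram $\Delta$ at the apex $\infty$, and the behavior at fixed critical points) are exactly the content of the classification in \cite{MRS}, so the verification reduces to checking that our level-$n$ choice is compatible with the axioms and then invoking those results. For each periodic postcritical cycle of period greater than one, the renormalization of Section \ref{Sec_RenormalizationNewton} yields a polynomial-like map whose filled Julia set carries a standard Hubbard tree; the axioms on the tree pieces (containment of the relevant postcritical points, invariance under the appropriate iterate of $N_p$, edge-to-edge dynamics, angled internal address structure) follow from the classical theory of Hubbard trees for postcritically finite polynomials. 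Taking preimages of these trees to absorb strictly preperiodic free postcritical points is an application of the same theory plus standard lifting along branched covers.

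The Newton rays require the most delicate check. One must verify that each ray is a well-defined arc assembled from preimage edges of $\Delta_n$, that it lands at a specified repelling (pre)periodic point on the relevant Hubbard tree piece, and that the collection of rays is permuted by $N_p$ in a combinatorially consistent way. Landing is the analogue of the classical landing theorem for external rays, but transplanted inside a Newton dynamical plane; it should be obtained from expansion of $N_p$ at the repelling target cycle together with a shrinking-diameter argument for the successive preimage edges. Consistency of the ray combinatorics with the tree combinatorics then amounts to showing that the ray indexes chosen in Section \ref{Sec_Newtrays} are compatible with the access structure of the Julia set of the polynomial-like renormalization.

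The main obstacle is precisely this last matching. The Newton graph is built from internal-access data at roots of $p$, while the Hubbard trees live inside small-scale renormalization pieces and carry their own external-ray structure coming from the polynomial-like model; bridging the two via Newton rays requires showing that the rays both exist (landing at the prescribed repelling points) and carry the \emph{correct} combinatorial labels so that $N_p$ acts on $\Delta^*_N$ as the axioms demand. Once this compatibility is established, forward invariance of the total graph and the remaining axioms follow by assembling the three partial verifications and checking that the gluing points are mapped as required.
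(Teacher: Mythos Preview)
Your overall strategy---verify the axioms of Definition~\ref{Def_AbstractExtNewtGraph} by treating the Newton-graph piece, the Hubbard-tree pieces, and the Newton-ray pieces in turn---is exactly what the paper does. The difference is one of emphasis and logical placement, and it matters.

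You treat the landing of Newton rays and the ``combinatorial matching'' between ray labels and tree access structure as the main obstacle to be overcome inside this proof. In the paper that work has already been done \emph{before} the theorem is stated: the existence of periodic rays landing at the prescribed repelling fixed points is Lemma~\ref{Lem_RepPtsAreLandingPts} and Lemma~\ref{Lem_RepPtsFixedRays}, and the graph $\Delta^*_N$ is then assembled in Theorem~\ref{thm:ExtendNewtGraph} with forward invariance built in. The hypothesis of Theorem~\ref{Thm_NewtMapsGenerateExtNewtGraphs} is that $\Delta^*_N$ is \emph{the} extended Newton graph already produced by that construction, so the proof is a short checklist: for each axiom one simply cites the relevant earlier result (Theorem~\ref{Thm_NewtonGraph} for axiom~(2), Proposition~\ref{Rem_ExtHubbardTrees} and Remark~\ref{Rem_RaysDisjointFromHubbardTrees} for the disjointness in axiom~(1), the construction for (3)--(7), a valence count plus Proposition~\ref{Prop_RegExt} for (8), and Riemann--Hurwitz for (9)). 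There is no deep ray/tree compatibility issue here because axioms~(6)--(7) demand only the existence of a ray of the correct period landing at a repelling fixed point of the tree, not any finer matching with the polynomial-like external-ray portrait.

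Two smaller points. First, the level of the Newton graph is not chosen merely to contain the fixed postcritical set; it is the minimal level at which distinct Hubbard trees lie in distinct complementary components and all non-free critical points have entered the graph (Definition~\ref{Def_TheNewtonGraphPCFMap}). Second, ``angled internal address structure'' is not among the axioms of Definition~\ref{Def_AbstractExtNewtGraph}; the Hubbard-tree axioms you need are just those of an abstract extended Hubbard tree in Poirier's sense, which the renormalization of Section~\ref{Sec_RenormalizationNewton} supplies directly.
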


It will be shown in \cite{LMS2} that every abstract extended Newton graph is realized by a unique postcritically finite Newton map up to affine conjugacy.  This result will be used to establish a bijection between the set of postcritically finite Newton maps up to affine conjugacy and the set of abstract extended Newton graphs up to some explicit equivalence. 

\subsection{Structure of this paper}  Section \ref{Sec_ExistingResults} introduces basic properties of Newton maps for later use, as well as a brief history of existing combinatorial models for Newton maps.

Section \ref{Sec_NewtonGraph} constructs the Newton graph edges of the extended Newton graph.  In so doing, the notions of a channel diagram, Newton graph and their abstract counterparts are defined.  Extensions of certain graph maps to a branched cover of the 2-sphere is also discussed.

Section \ref{Sec_RenormalizationNewton} constructs the Hubbard tree edges of the extended Newton graph.  Preliminaries on extended and abstract extended Hubbard trees are covered in \ref{Sec_HubbardTrees} and \ref{Sec_PolynomialLike}.   The renormalization result for Newton maps in \cite{DLSS} is introduced in Section \ref{Sec:RenormOfNewtonMaps} and the Hubbard trees are constructed.

Section \ref{Sec_Newtrays} initiates the construction of Newton ray edges, which will connect the Newton graph with fixed points of the polynomial-like mappings arising from renormalization.  An ordering is placed on the rays to enable well-defined choices among the rays landing at a single fixed point. 

Section \ref{Subsec_ConstructionExtNewtGraph} combines the three types of edges to produce the extended Newton graph.  An example of such a graph is given in \ref{Subsec_ExamplesExtNewtGraphs}.

Section \ref{Sec_NewtMapsGenerateExtNewtGraphs} defines the abstract analog of Newton rays and extended Newton graphs, and shows that an extended Newton graph constructed for a postcritically finite Newton maps satisfies the abstract definition.  The main result of the paper (Theorem \ref{Thm_NewtMapsGenerateExtNewtGraphs}) is proven.\\

\section{Known results about Newton maps}\label{Sec_ExistingResults}

This section will catalog some well-known properties of Newton maps for later use. A brief history of the various combinatorial models for Newton maps will be given as well.

The following characterization of Newton maps in terms of fixed point multipliers is essentially due to Janet Head (Proposition 2.1.2 \cite{He}).

\begin{proposition}[{\cite[Cor 2.9]{RS}}]
A rational map $f$ of degree $d\geq 3$ is a Newton map if and only if $\infty$ is a repelling fixed point of $f$ and for each of the other fixed points $\xi\in\widehat{\C}$, there is an integer $m\geq 1$ so that $f'(\xi)=(m-1)/m$.
\end{proposition}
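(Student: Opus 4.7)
The plan is to prove the two implications separately. For the ``only if'' direction, suppose $f=N_p$ with $\deg p = n \geq 3$. Near $\infty$ one has $p/p' \sim z/n$, so $N_p(z) \sim \frac{n-1}{n}z$; in the coordinate $w=1/z$ this makes the multiplier at $\infty$ equal $n/(n-1) > 1$, so $\infty$ is repelling. At a finite fixed point $\xi$, $N_p(\xi)=\xi$ forces $p(\xi)=0$; writing $p(z)=(z-\xi)^m q(z)$ with $q(\xi)\neq 0$ and differentiating yields $N_p'(\xi) = (m-1)/m$ after a short local computation.

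For the converse, the idea is to reconstruct $p$ from the fixed-point data of $f$. The multiplier hypotheses force every fixed point of $f$ in $\widehat{\C}$ to be simple (none has multiplier $1$), so the $d+1$ fixed points counted with multiplicity split into $\infty$ and exactly $d$ finite simple fixed points $\xi_1,\dots,\xi_d$ with associated integers $m_i \geq 1$ satisfying $f'(\xi_i)=(m_i-1)/m_i$. I would set $p(z) := \prod_{i=1}^d (z-\xi_i)^{m_i}$ and aim to prove $f = N_p$ by verifying the identity $p(z)/p'(z) = z - f(z)$. Since $\deg N_p$ equals the number of distinct roots of $p$, which is $d$, this identity automatically yields an equality of rational maps of degree $d$.

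Set $g(z) := z - f(z)$. At each $\xi_i$, $g(\xi_i)=0$ and $g'(\xi_i) = 1 - f'(\xi_i) = 1/m_i$, so $1/g$ has a simple pole at $\xi_i$ with residue $m_i$. The repelling hypothesis at $\infty$ gives $f(z) \sim \lambda z$ there with $\lambda \neq 1$, whence $g$ has a simple pole at $\infty$ with leading term $(1-\lambda)z$, and so $1/g$ vanishes at $\infty$. Meanwhile $p'/p = \sum_{i=1}^d m_i/(z-\xi_i)$ has exactly the same poles, residues, and behaviour at $\infty$. Their difference is therefore a rational function with no poles in $\widehat{\C}$ vanishing at $\infty$, hence identically zero; so $p'/p = 1/g$, giving $f = z-p/p' = N_p$. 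The main point to watch is the behaviour at $\infty$: the repelling hypothesis is exactly what forces $g$ to have a pole (rather than a zero) there, which rules out an additive polynomial term in the partial-fraction expansion of $1/g$ and closes the argument.
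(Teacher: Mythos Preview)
The paper does not supply its own proof of this proposition; it is quoted from Head's thesis and noted as a special case of a result in \cite{RS}. So there is no in-paper argument to compare against.

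Your argument is correct. The forward direction is a direct local computation. For the converse, the partial-fractions identification of $1/g$ with $p'/p$ is the clean route: both are rational functions with simple poles exactly at the $\xi_i$, with matching residues $m_i$, and both vanish at $\infty$, so their difference is holomorphic on $\widehat{\C}$ and vanishes at $\infty$, hence is identically zero. Two small points worth making explicit. First, when you write ``$f(z)\sim\lambda z$'' near $\infty$, your $\lambda$ is the reciprocal of the multiplier at $\infty$ (in the chart $w=1/z$); the repelling hypothesis gives $|\lambda|<1$, hence in particular $\lambda\neq 1$, which is precisely what makes $g(z)=(1-\lambda)z+O(1)$ a genuine simple pole at $\infty$ rather than something of lower order. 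Second, it is worth saying why $1/g$ has no poles other than at the $\xi_i$: the finite zeros of $g=z-f$ are exactly the finite fixed points of $f$, and these have already been enumerated as $\xi_1,\dots,\xi_d$ (all simple by the multiplier hypothesis, so the fixed-point count $d+1$ is attained by distinct points).
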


Let $p$ be a monic polynomial of degree $d$ with complex coefficients and simple roots $a_1, a_2,...,a_d$.  Define the Newton map corresponding to $p$ by
\begin{equation} \label{Eq_NewtonMapDefinition}
N_p=id - \frac{p}{p'}.
\end{equation}  One can see from the equation \[
N'_p= \frac{p\cdot p''}{(p')^2}
\]  
that the roots of $p$ are attracting fixed points of $N_p$.
The point at infinity is a repelling fixed point of $N_p$ with multiplier $d/(d-1)$.

Note that the roots of $p$ must be simple for the purposes of this study because otherwise the corresponding Newton map would have an attracting fixed point that is not superattracting, and would thus not be postcritically finite.  The map $N_p$ has degree $d$, and its $d+1$ fixed points are given by the roots $a_1,a_2,...,a_d,\infty$; thus all finite fixed points of the Newton map are critical.

Shishikura \cite{Sh} proved that the Julia set of a rational map is connected if there is only one repelling fixed point.  Combining this with the facts just mentioned, he obtains the following.

\begin{proposition}\label{prop:Jconnected}
The Julia set of a Newton map $N_p$ is connected.
\end{proposition}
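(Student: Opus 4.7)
The plan is to combine Head's theorem (just stated) with Shishikura's result cited in the paragraph preceding the proposition. Shishikura's theorem, in the form alluded to here, says that a rational map whose Julia set is disconnected must have at least two weakly repelling fixed points (fixed points with multiplier of modulus $\geq 1$). So it suffices to show that $N_p$ has exactly one weakly repelling fixed point, namely $\infty$.

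First I would enumerate the fixed points of $N_p$. Since $\deg N_p = d$, there are $d+1$ fixed points counted with multiplicity; these are the $d$ roots $a_1,\ldots,a_d$ of $p$ (each a superattracting fixed point, as shown in the preceding computation of $N_p'$) together with $\infty$, whose multiplier is $d/(d-1) > 1$. Thus $\infty$ is repelling.

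Next I would rule out any other weakly repelling fixed points using Head's theorem: for each finite fixed point $\xi$, the multiplier is $(m-1)/m$ for some integer $m\geq 1$, hence lies in $[0,1)$. In particular $|N_p'(\xi)| < 1$ strictly, so no finite fixed point is weakly repelling (not even parabolic). Therefore $\infty$ is the unique weakly repelling fixed point of $N_p$.

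Finally, applying Shishikura's theorem gives the result: if the Julia set were disconnected, there would have to be a second weakly repelling fixed point, contradicting the previous step. The main (essentially only) subtlety is making sure Shishikura's statement is invoked in the weakly repelling form — otherwise one would need to separately argue that no finite fixed point is parabolic, but this is immediate from Head's theorem since $(m-1)/m < 1$ for all $m \geq 1$. No genuine obstacle arises; the proposition is a direct corollary of the two imported results.
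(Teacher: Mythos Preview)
Your proposal is correct and follows essentially the same route as the paper: combine the fixed-point analysis (all finite fixed points have multiplier in $[0,1)$, while $\infty$ is repelling) with Shishikura's theorem to conclude connectivity. If anything, your version is slightly more careful in invoking the weakly repelling form of Shishikura's result and explicitly ruling out parabolic fixed points via Head's theorem, whereas the paper simply cites Shishikura and the preceding facts without spelling this out.
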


The Fatou components of $N_p$ that contain roots of $p$ play a foundational role in our combinatorial constructions.

\begin{definition}[Immediate basin] Let $N_p$ be a Newton map and $\xi \in \C$ a finite
fixed point of $N_p$. Let $\displaystyle B_{\xi} = \{ z \in \C: \lim_{n\to\infty}
N_p^n(z)=\xi \}$ be the basin (of attraction) of $\xi$. The connected component of $B_\xi$ containing $\xi$ is
called the \emph{immediate basin} of $\xi$ and denoted $U_\xi$.
\end{definition}

It was shown by Przytycki that $U_\xi$ is simply connected and unbounded \cite{Pr}. 

\begin{definition}[Invariant access to $\infty$] Let $\xi$ be an
attracting fixed point of $N_p$ and $U_\xi$ its immediate basin. An
\emph{access} of $\xi$ to $\infty$ is a homotopy class of curves
in $U_\xi$ that begin at $\xi$, land at $\infty$ and are
homotopic in $U_\xi$ with fixed endpoints.
\end{definition}

Let $m_{\xi}$ be the number of critical points of a Newton map $N_p$
in the immediate basin $U_{\xi}$, counted with multiplicity. Then
$N_p|_{U_{\xi}}$ is a branched cover of degree $m_{\xi}+1$. The following proposition is used to produce the first-level combinatorial data for Newton maps.

\begin{proposition}[Accesses to infinity {\cite{HSS}}] \label{Prop:AccessesHSS}   The immediate basin $U_{\xi}$ has exactly $m_{\xi}$ accesses to $\infty$.
\end{proposition}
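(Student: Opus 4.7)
The plan is to pull the dynamics of $N_p$ on the basin $U_\xi$ back to the unit disk via a Riemann map, turning the question into one about fixed points of a finite Blaschke product on $\partial\D$. This reduces a topological/combinatorial count of homotopy classes to a holomorphic fixed-point count on $\hat{\C}$.

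First I would record the setup. Since $U_\xi$ is simply connected (Przytycki) and $N_p|_{U_\xi}\colon U_\xi\to U_\xi$ is proper of degree $m_\xi+1$, we may choose a Riemann map $\phi\colon(U_\xi,\xi)\to(\D,0)$ and form the conjugate $g:=\phi\circ N_p\circ\phi^{-1}\colon\D\to\D$. This $g$ is a proper holomorphic map of degree $m_\xi+1$, i.e.\ a finite Blaschke product with $g(0)=0$, and therefore extends by Schwarz reflection to a rational map of $\hat\C$ of the same degree. Such a rational map has $(m_\xi+1)+1=m_\xi+2$ fixed points on $\hat\C$, counted with multiplicity. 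The Denjoy--Wolff theorem shows that $0$ is the unique fixed point of $g$ inside $\D$, and Schwarz symmetry then gives a single fixed point $\infty$ outside $\overline\D$; subtracting leaves exactly $m_\xi$ fixed points on $\partial\D$, all automatically simple (any degenerate fixed point on $\partial\D$ would force the multiplier at $\infty$ to be $1$, contradicting $N_p'(\infty)=d/(d-1)>1$ after reading off the derivative through $\phi$).

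Next I would match these $m_\xi$ boundary fixed points with accesses to $\infty$. The direction I care about is: each fixed point $\zeta\in\partial\D$ of $g$ corresponds (via the prime-end compactification of $U_\xi$) to a fixed prime end of $N_p|_{U_\xi}$; since the prime end is fixed and $g$ is expanding at $\zeta$, a standard application of the snail/landing lemma (cf.\ \cite{HSS}) shows that the radial ray from $0$ to $\zeta$ maps under $\phi^{-1}$ to a curve landing at some fixed point of $N_p$ on $\partial U_\xi$. The only finite fixed points of $N_p$ are $a_1,\dots,a_d$, which lie inside their own immediate basins, so this landing point must be $\infty$. Thus each boundary fixed point yields an access of $\xi$ to $\infty$, and distinct boundary fixed points yield homotopically distinct accesses because distinct radial rays in $\D$ are not homotopic rel endpoints.

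For the reverse direction I would use the linearization of $N_p$ at $\infty$: because $N_p'(\infty)=d/(d-1)$ is real and greater than $1$, $N_p$ is locally conjugate near $\infty$ to $w\mapsto \lambda w$, and the intersection of $U_\xi$ with a small neighborhood of $\infty$ is a disjoint union of invariant sectors, one per homotopy class of access. Thus $N_p$ permutes the accesses trivially, so each access lifts through $\phi$ to a fixed prime end of $g$, and hence to one of our $m_\xi$ boundary fixed points. Combining the two directions gives a bijection, and the count $m_\xi$ follows. The main obstacle I foresee is the careful setup of the prime-end correspondence and verifying that all $m_\xi$ fixed points on $\partial\D$ genuinely correspond to landing accesses (as opposed to non-accessible prime ends); this is handled cleanly by the simplicity of the boundary fixed points together with the snail lemma, but it is the only non-formal step.
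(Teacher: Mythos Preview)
The paper does not give its own proof of this proposition; it simply cites \cite{HSS} and moves on. So there is no in-paper argument to compare against. Your outline is essentially the standard HSS argument (conjugate to a Blaschke product, count boundary fixed points, match them with accesses via prime ends and a landing argument), and it is correct in structure.

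One point of confusion to fix: your justification that the boundary fixed points of $g$ are simple is garbled. You write that a parabolic fixed point on $\partial\D$ ``would force the multiplier at $\infty$ to be $1$, contradicting $N_p'(\infty)=d/(d-1)$''. But the point $\infty$ in the Blaschke picture is the Schwarz reflection of $0\in\D$, so its multiplier is $1/g'(0)=1/N_p'(\xi)$, which has nothing to do with the repelling fixed point of $N_p$ at $\infty\in\hat\C$. The correct reason the boundary fixed points are simple is that $0$ is attracting (Schwarz), hence $\D$ lies in its basin, and by reflection so does $\hat\C\setminus\overline\D$; a parabolic fixed point on $\partial\D$ would require an attracting petal on one side of $\partial\D$, which is impossible since both sides are already in attracting basins. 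With that correction, your count of $m_\xi$ simple boundary fixed points is clean, and the rest of your plan (invariant curves landing at fixed points on $\partial U_\xi$, ruling out the finite $a_i$, invariance of accesses from the local linearization at $\infty$) goes through.
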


Let $f:\S^2 \to \S^2 $ be an orientation-preserving branched
cover of degree greater than one. Denote the local degree of $f$ at a point $z$ by $\deg_z f$.

\begin{definition} \label{Def_PCFNewtonMaps}
Set $C_f = \{ critical \ points \ of \ f \} = \{x|\deg_x f>1\}$ and
\[
P_f=\bigcup_{n\ge1} f^n(C_f).
\]
The map $f$ is called a
\emph{postcritically finite branched cover} if $P_f$ is finite.
We say that $f$ is \emph{postcritically fixed} if for each $x\in C_f$, there exists $n>0$ such that $f^{n}(x)$ is a fixed point of $f$.
\end{definition}

Combinatorial models for various types of postcritically finite Newton maps exist.  Janet Head introduced the ``Newton tree" to characterize postcritically finite cubic Newton maps \cite{He}. Tan Lei built upon these ideas to give a classification of postcritically finite cubic Newton maps in terms of matings and captures \cite{TL}.  Tan Lei also gave another combinatorial classification of the Newton cubic family using abstract graphs. More precisely, every postcritically finite cubic Newton map gives rise to a forward invariant finite connected graph that satisfies certain axioms.  Conversely, every graph which satisfies these axioms is realized by a unique postcritically finite cubic Newton map using Thurston's theorem.  Finally, the graph associated to a postcritically finite cubic Newton map is realized by the same cubic Newton map under Thurston's theorem (all graphs and rational maps are considered up to the natural equivalences).

Fewer results exist for higher degree.  Jiaqi Luo studies Newton maps of arbitrary degree with exactly one non-fixed critical value, which we call ``unicritical Newton maps". For such maps, Luo constructs a forward-invariant, finite topological graph analogous to the Newton graph of this paper. In the spirit of Tan Lei's work, he defines a ``topological Newton map" to be a branched cover with the same critical orbit properties as a unicritical Newton map, and then shows that Thurston obstructions for topological Newton maps may only be Levy cycles of a special type \cite{Lu2}.  Assuming further that a topological Newton map satisfies certain explicit conditions on the attracting basins of the fixed critical points, Luo proves that no Thurston obstructions exist if the non-fixed critical value is either periodic or contains a fixed critical point in its orbit \cite{Lu}.  

Using a very different approach, \cite[Section 11]{CGNPP} describes a process by which Newton maps whose critical points are all fixed may be produced by ``blowing up" the edges of a multigraph. In fact, for any degree, the maps with only fixed critical points have been classified in the rational \cite{HClass} and anti-rational case \cite{Gey, LLM}.

In his thesis, Johannes R\"uckert \cite{RueckertThesis} classified all \emph{postcritically fixed} Newton maps for arbitrary degree (the results are also found in \cite{MRS}). For every postcritically fixed Newton map, a connected forward-invariant finite graph that contains the whole postcritical set is constructed.  The notion of an ``abstract Newton graph" is introduced, and it is seen that the forward-invariant graph just described is in fact an abstract Newton graph. It is shown that each abstract Newton graph is realized by a unique postcritically fixed Newton maps, and that the abstract graphs give the classification. The results of \cite{DLSS} extend the Newton graph construction to \emph{postcritically finite} Newton maps, and use puzzle partitions to establish combinatorial properties of the Newton graph that are essential to the present work. Both the present work and \cite{LMS2} are based on the thesis of \cite{Mik}.\\

\section{Newton graphs from Newton maps} \label{Sec_NewtonGraph}

Some preliminaries about graph maps are presented, following \cite[Chapter 6]{BFH}.  In particular, a condition under which a graph map may be uniquely extended to a branched cover of the whole sphere is presented which will be useful for the definition of the abstract extended Newton graph.  The following is the so-called ``Alexander trick" which is fundamental to such extension results.

\begin{lemma}
\label{Lem:AlexanderTrick} Let $h:\S^1 \to \S^1$ be an
orientation-preserving homeomorphism. Then there exists an orientation
preserving homeomorphism $\ol{h}:\overline{\D} \to \overline{\D}$ such that $\ol{h}|_{\S^1}
=h$. The map $\ol{h}$ is unique up to isotopy relative $\S^1$.
\end{lemma}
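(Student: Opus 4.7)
The plan is to prove existence by the classical coning construction and uniqueness via an explicit isotopy that shrinks the support of any disk homeomorphism toward the origin.

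For existence, I would define $\bar{h}:\overline{\D}\to\overline{\D}$ by $\bar{h}(0)=0$ and $\bar{h}(z)=|z|\cdot h(z/|z|)$ for $z\neq 0$. By construction $\bar{h}|_{\S^1}=h$. Continuity on $\overline{\D}\setminus\{0\}$ is inherited from $h$, while continuity at the origin follows from the estimate $|\bar{h}(z)|=|z|$. A two-sided inverse is given by the analogous formula using $h^{-1}$, so $\bar{h}$ is a bijective continuous self-map of the compact Hausdorff space $\overline{\D}$, hence a homeomorphism. Orientation preservation holds because $\bar{h}$ sends the radius through each $w\in\S^1$ homeomorphically onto the radius through $h(w)$, and $h$ preserves orientation on $\S^1$.

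For uniqueness up to isotopy rel $\S^1$, consider two extensions $\bar{h}_0,\bar{h}_1$ and form $g:=\bar{h}_1^{-1}\circ\bar{h}_0$, which is a homeomorphism of $\overline{\D}$ restricting to the identity on $\S^1$. It suffices to produce an isotopy $G_t$ from $\mathrm{id}_{\overline{\D}}$ to $g$ that fixes $\S^1$ pointwise for every $t$; then $\bar{h}_1\circ G_t$ is an isotopy from $\bar{h}_1$ to $\bar{h}_0$ relative to $\S^1$. I would define, for $t\in(0,1]$,
\[
G_t(z)=\begin{cases} t\,g(z/t), & |z|\leq t,\\ z, & t\leq|z|\leq 1,\end{cases}
\]
and set $G_0:=\mathrm{id}_{\overline{\D}}$. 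The two cases agree on the circle $|z|=t$ because $g$ fixes $\S^1$ pointwise, so each $G_t$ is a well-defined self-homeomorphism of $\overline{\D}$ fixing $\S^1$, with $G_1=g$ and $G_0=\mathrm{id}$.

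The single subtle point, and the main potential obstacle, is joint continuity of $(z,t)\mapsto G_t(z)$ at the corner $(0,0)$. If $(z_n,t_n)\to(0,0)$, then for indices with $|z_n|\leq t_n$ one has $|G_{t_n}(z_n)|=t_n\,|g(z_n/t_n)|\leq t_n\to 0$, while for indices with $|z_n|\geq t_n$ one has $G_{t_n}(z_n)=z_n\to 0$; in either case $G_{t_n}(z_n)\to 0=G_0(0)$. Continuity elsewhere, and verification that each $G_t$ is a homeomorphism, are routine.
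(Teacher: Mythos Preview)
Your proof is correct and is precisely the classical Alexander trick argument. Note, however, that the paper does not supply its own proof of this lemma: it is stated as a well-known preliminary fact (indeed labeled ``Alexander trick'') and immediately followed by the next definition, so there is nothing in the paper to compare against. Your coning construction for existence and the shrinking isotopy $G_t(z)=t\,g(z/t)$ for uniqueness are the standard textbook arguments, and your handling of joint continuity at $(0,0)$ is fine. One minor point: the orientation-preservation argument is a bit informal; a cleaner justification is that any orientation-preserving circle homeomorphism is isotopic to the identity, and coning that isotopy shows $\bar h$ is isotopic to $\mathrm{id}_{\overline{\D}}$, hence orientation-preserving.
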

\begin{definition}[Finite graph] A \emph{vertex} is a point in $\S^2$. Let $V$ be a finite set of distinct vertices. An \emph{edge} is a subset of $\S^2$ of the form $\lambda(I)$ where $I=[0,1]$ and
\begin{itemize}
 \item $\lambda:I\to\S^2$ is continuous and injective on $(0,1)$, and
\item $\lambda(x)\in V \iff x \in \partial I$.
\end{itemize}
Let $E$ be a finite set of edges that (pairwise) intersect only at vertices.
A \emph{finite graph} (in $\S^2$) is a pair of the form $(V,E).$
\end{definition}
 We sometimes omit the reference to the ambient space $\mathbb{S}^2$ though it is always implicit.

\begin{definition}[Subgraphs]
Let $\Gamma_1=(V_1,E_1)$ and $\Gamma_2=(V_2,E_2)$ be finite graphs. We say that $\Gamma_1$ is a \emph{subgraph} of $\Gamma_2$ (denoted $\Gamma_1\subset\Gamma_2$) if $V_1\subset V_2$ and $E_1\subset E_2$. 
\end{definition}

\begin{definition}[Graph map]
Let $\Gamma_1,\Gamma_2$ be connected finite graphs. A continuous map $f:\Gamma_1 \to \Gamma_2$ is called a \emph{graph map} if it is injective on each edge of $\Gamma_1$, the set of vertices is forward and backward invariant, and $f$ is compatible with the graph embeddings in the sense that for each vertex $v$ in $\Gamma_1$, the map $f$ preserves the cyclic order of edges terminating at $v$.
\end{definition}

The \emph{degree} $\deg_v(f)$ of $f$ at a vertex $v$ in $\Gamma_1$ is defined to be the number of edges at $v$ that are mapped by $f$ to the same image edge at $f(v)$ (since $f$ preserves the local cyclic ordering, this definition is independent of the choice of image edge).

\begin{definition}[Regular extension]
\label{Def_GraphMap} Let $f:\Gamma_1\to\Gamma_2$ be a graph map. An
orientation-preserving branched cover $\ol{f}:\S^2\to\S^2$ is
called a \emph{regular extension} of $f$ if $\ol{f}|_{\Gamma_1}=f$
and $\ol{f}$ is injective on each component of $\S^2\setminus
\Gamma_1$.
\end{definition}
It follows that every regular extension $\ol{f}$ may have critical points only at the vertices of $\Gamma_1$, and the local degree of $\ol{f}$ coincides with $\deg_v(f)$.
\begin{lemma}[Isotopic graph maps {\em \cite[Corollary 6.3]{BFH}}]
\label{Lem_IsotopGraphMaps}  Let
$f,g:\Gamma_1\to\Gamma_2$ be two graph maps that coincide on the
vertices of $\Gamma_1$ such that for each edge $e$ in $\Gamma_1$ we have $f(e)=g(e)$ as a set. Suppose that $f$ and $g$
have regular extensions $\ol{f},\ol{g}:\S^2\to\S^2$. Then there
exists a homeomorphism $\psi:\S^2\to\S^2$, isotopic to the identity
relative the vertices of $\Gamma_1$, such that $\ol{f}=\ol{g} \circ
\psi$.
\end{lemma}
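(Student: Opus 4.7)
The plan is to build $\psi$ piece by piece: first on the edges of $\Gamma_1$, then across each face of $\S^2\setminus\Gamma_1$, and finally to argue isotopy to the identity using Lemma~\ref{Lem:AlexanderTrick}.

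\emph{Step 1 (definition on $\Gamma_1$).} For each edge $e\subset\Gamma_1$, the restrictions $f|_e$ and $g|_e$ are both continuous injections from the interval $e$ onto the same arc $f(e)=g(e)\subset\Gamma_2$, and they agree on the endpoints $\partial e$. Hence $\psi_e:=(g|_e)^{-1}\circ f|_e$ is a well-defined homeomorphism of $e$ fixing $\partial e$, and $f|_e=g|_e\circ\psi_e$. Gluing the $\psi_e$ over all edges gives a homeomorphism $\psi|_{\Gamma_1}:\Gamma_1\to\Gamma_1$ which is the identity on vertices and satisfies $f=g\circ\psi|_{\Gamma_1}$.

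\emph{Step 2 (extension across faces).} Let $F$ be a face of $\S^2\setminus\Gamma_1$. By the definition of regular extension, $\ol f|_F$ and $\ol g|_F$ are orientation-preserving homeomorphisms onto faces of $\S^2\setminus\Gamma_2$. I will argue that $\ol f(F)=\ol g(F)$: both maps induce from the orientation of $F$ the same cyclic sequence of edges along $\partial F$, and since $f$ and $g$ agree on all vertices of $\Gamma_1$ while $f(e)=g(e)$ as sets, both boundary parametrizations trace out the same oriented edge-cycle in $\Gamma_2$. This oriented cycle bounds a unique face to its left, so the images coincide. I then set $\psi|_F:=(\ol g|_F)^{-1}\circ\ol f|_F$, which is an orientation-preserving homeomorphism of $F$. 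Continuity up to $\partial F$ follows because $\ol g$ is a local homeomorphism at every non-vertex of $\Gamma_1$ (critical values lie at vertices), while at vertices the local model of a branched cover combined with the fact that $\psi$ is already defined and continuous on the edges forces the limit values to agree with $\psi|_{\Gamma_1}$. Assembling over all faces yields a global homeomorphism $\psi:\S^2\to\S^2$ satisfying $\ol f=\ol g\circ\psi$ by construction, and fixing the vertices of $\Gamma_1$.

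\emph{Step 3 (isotopy to the identity).} Each $\psi_e$ is an orientation-preserving homeomorphism of an interval fixing both endpoints and hence is isotopic to $\mathrm{id}_e$ rel $\partial e$; gluing gives an isotopy $\psi_t|_{\Gamma_1}$ from $\psi|_{\Gamma_1}$ to $\mathrm{id}_{\Gamma_1}$ fixing every vertex at every time. For each face $F$, Lemma~\ref{Lem:AlexanderTrick} applied to the isotopy $\psi_t|_{\partial F}$ (using the ``coning'' extension $(r,\theta)\mapsto(r,\psi_t(\theta))$ under a uniformization of $\ol F$ as $\ol{\D}$) extends this boundary isotopy to an isotopy of $\ol F$ starting at $\psi|_{\ol F}$ and ending at the identity, with each intermediate map agreeing with $\psi_t$ on $\partial F$ and hence fixing the vertices. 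Patching these face-by-face isotopies along $\Gamma_1$ produces an ambient isotopy $\Psi_t$ of $\S^2$ from $\psi$ to $\mathrm{id}_{\S^2}$ relative to the vertices of $\Gamma_1$.

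\emph{Main obstacle.} The cleanest step is the edge definition; the real work is in Step~2, specifically the equality $\ol f(F)=\ol g(F)$. This needs a careful orientation/combinatorial argument that both boundary parametrizations prescribe identical oriented edge-cycles, together with the (intuitive but formally delicate) statement that an oriented Jordan-curve boundary on $\S^2$ determines a unique face to its left; the verification of continuity of $\psi$ across the vertices of $\Gamma_1$ (where both $\ol f$ and $\ol g$ may branch) is the second subtle point and is handled by a local normal-form analysis of branched covers.
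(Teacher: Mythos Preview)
The paper does not prove this lemma; it is stated with a citation to \cite[Corollary 6.3]{BFH} and no argument is given. So there is nothing in the paper to compare against.

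Your approach is the standard one and is correct in outline: define $\psi$ edge-by-edge via $(g|_e)^{-1}\circ f|_e$, extend face-by-face via $(\ol g|_F)^{-1}\circ \ol f|_F$, and then cone the resulting boundary homeomorphisms to the identity using the Alexander trick. The two points you flag as the ``main obstacle'' are exactly the places where care is needed. For $\ol f(F)=\ol g(F)$, it is cleaner to argue via the local sector picture (as in the setup preceding Proposition~\ref{Prop_RegExt}) rather than via a global ``face to the left of an oriented Jordan curve'' statement, since $\partial F$ need not be a simple closed curve. For continuity of $\psi$ at vertices, note that $(\ol g|_F)^{-1}$ does not in general extend continuously to $\ol{g(F)}$ when the boundary walk of $F$ is not simple; the correct formulation is that $\psi|_F$ extends continuously to $\ol F$ (not that $(\ol g|_F)^{-1}$ does), and this is again most easily checked in the local sector coordinates near each vertex, where both $\ol f$ and $\ol g$ are explicit angle-scaling maps on matching sectors. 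With these adjustments your argument goes through.
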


We must establish some notation for the following proposition from \cite{BFH}.  Let $f:\Gamma_1\to\Gamma_2$ be a graph map. For each
vertex $v$ of $\Gamma_i$ with fixed $i\in\{1,2\}$, choose a neighborhood $U_v\subset\S^2$ such
that all edges of $\Gamma_i$ that enter $U_v$ terminate at $v$, the vertex $v$ is the only vertex of $\Gamma_i$ in $U_v$, and the neighborhoods $U_v$ and $U_w$ are disjoint for all vertices $v\neq w$ in $\Gamma_i$. We
may assume without loss of generality that in local coordinates, $U_v$ is a round disk of radius $1$ that is centered at $v$ and that the intersection of any edge of $\Gamma_i$ with $U_v$ is either empty or a radial line segment.  Without loss of generality, we may assume that $f|_{U_v}$ is length-preserving for all vertices $v$ in $\Gamma_1$.

We describe how to explicitly extend $f$ to each $U_v$.  For a vertex $v\in\Gamma_1$, let $\gamma_1$ and
$\gamma_2$ be two adjacent edges ending there. In local coordinates,
these are radial lines at angles $\Theta_1,\Theta_2$ where
$0<\Theta_2-\Theta_1\leq 2\pi$ (if $v$ is an endpoint of $\Gamma_1$,
then set $\Theta_1=0$, $\Theta_2=2\pi$). In the same way, choose
arguments $\Theta_1',\,\Theta_2'$ for the image edges in $U_{f(v)}$
and extend $f$ to a map $\tilde{f}$ on $\Gamma_1\cup\bigcup_v U_v$
defined by
\begin{equation}\label{eqn:localExtension}
\tilde{f}(\rho,\Theta)=\left(\rho,
\frac{\Theta_2'-\Theta_1'}{\Theta_2-\Theta_1}\cdot(\Theta-\Theta_1)+\Theta_1'\right),
\end{equation}
where $(\rho,\Theta)$ are polar coordinates in the sector bounded by
the rays at angles $\Theta_1$ and $\Theta_2$. In particular, sectors are
mapped onto sectors in an orientation-preserving way.

\begin{proposition}[{\cite[Proposition 5.4]{BFH}}]
\label{Prop_RegExt}  A graph map
$f:\Gamma_1\to\Gamma_2$ has a regular extension if and only if for
every vertex $y\in\Gamma_2$ and every component $U$ of
$\S^2\setminus\Gamma_1$, the extension $\tilde{f}$ is injective on
\[
    \bigcup_{v\in f^{-1}(y)} U_v \cap U\;.
\]
\end{proposition}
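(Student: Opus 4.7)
I would prove Proposition~\ref{Prop_RegExt} in the spirit of the sector-model set-up preceding its statement. The \emph{necessity} direction is essentially formal: assume a regular extension $\ol{f}$ exists. Up to isotopy relative to the vertices of $\Gamma_1$, I may adjust $\ol{f}$ on each $U_v$ to coincide with the canonical sector map $\tilde{f}$, since on a small disk around $v$ the branched-cover germ of $\ol{f}$ is, after a change of coordinates, precisely the piecewise angular model $(\rho,\Theta)\mapsto(\rho,\alpha\Theta)$ used to define $\tilde{f}$. Lemma~\ref{Lem:AlexanderTrick} and Lemma~\ref{Lem_IsotopGraphMaps} allow this adjustment while keeping $f$ fixed on $\Gamma_1$. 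Because $\ol{f}$ is injective on each component $U$ of $\S^2\setminus\Gamma_1$, so is the adjusted $\tilde{f}$ on $U\cap\bigcup_v U_v$; in particular $\tilde{f}$ is injective on $\bigcup_{v\in f^{-1}(y)} U_v\cap U$ for every vertex $y$ of $\Gamma_2$.

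For \emph{sufficiency} I would build $\ol{f}$ by extending $\tilde{f}$ over each component $U$ of $\S^2\setminus\Gamma_1$ individually. Set $K_U:=\ol{U}\setminus\bigcup_v\op{int}(U_v)$; this is a compact topological disk whose boundary circle $\partial K_U$ alternates between subarcs of edges of $\Gamma_1$ lying on $\partial U$ and arcs of the circles $\partial U_v\cap U$ for the vertices $v$ on $\partial U$. The map $\tilde{f}$ is already defined on $\partial K_U$: on edge arcs by the graph map $f$, and on the $\partial U_v$-arcs by the sector model. The heart of the proof is verifying that $\tilde{f}|_{\partial K_U}$ is globally injective, hence a homeomorphism onto a simple closed curve. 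Injectivity on each individual arc is automatic (injectivity of $f$ on edges and of the angular model on each sector), so the only way distinct arcs of $\partial K_U$ could have overlapping images is near some vertex $y\in\Gamma_2$; this is exactly what the hypothesis forbids for the given $U$. The image curve then bounds a topological disk, which one identifies as $\ol{V}\setminus\bigcup_w\op{int}(U_w)$ for a unique component $V$ of $\S^2\setminus\Gamma_2$ (the component traced out by the edge-images along $\partial U$), and Lemma~\ref{Lem:AlexanderTrick} extends $\tilde{f}|_{\partial K_U}$ to an orientation-preserving homeomorphism onto that disk. Gluing these extensions across all components of $\S^2\setminus\Gamma_1$ together with $\tilde{f}$ on $\Gamma_1\cup\bigcup_v U_v$ yields an orientation-preserving branched cover $\ol{f}:\S^2\to\S^2$ that restricts to $f$ on $\Gamma_1$ and is injective on every component of $\S^2\setminus\Gamma_1$ by construction.

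\textbf{Main obstacle.} The crux is the boundary injectivity of $\tilde{f}|_{\partial K_U}$ as a map of circles. Once that is secured, identifying the target disk with the correct component of $\S^2\setminus\Gamma_2$ and assembling the homeomorphisms across different components of $\S^2\setminus\Gamma_1$ are routine, since each component of $\S^2\setminus\Gamma_2$ is cut out by a circuit of edges and the graph map $f$ determines, on each edge arc of $\partial U$, which side of $\Gamma_2$ the image lies on. In short, the hypothesis of the proposition is precisely what is needed to convert the local sector-compatibility data at vertices into the global boundary injectivity required for the Alexander-trick fill-in.
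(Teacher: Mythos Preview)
The paper does not supply its own proof of this proposition; it is quoted verbatim from \cite[Proposition~5.4]{BFH} and used as a black box. Your sketch is the standard argument and is essentially what one finds in \cite{BFH}: necessity is the formal observation that a regular extension already realizes the sector model near each vertex, and sufficiency is the Alexander-trick fill-in over each complementary disk $K_U$ once the hypothesis guarantees that $\tilde{f}|_{\partial K_U}$ is an embedding of circles. There is no discrepancy to report.
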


The combinatorial classification of postcritically \emph{fixed} Newton maps (all critical points mapping onto fixed points after finitely many iterations) was given in \cite{MRS} using a combinatorial object called the ``Newton graph".    We give the analogous construction for a postcritically \emph{finite} Newton map, noting that the results mentioned below from \cite{MRS} were proven there in this more general context.  The graph constructed below will also be called the Newton graph.

 Let the superattracting fixed points of a postcritically finite Newton map $N_p$ be denoted by $a_1,a_2,\ldots,a_d$. Let $U_i$
denote the immediate basin of $a_i$. Then $U_i$ has a global
B\"ottcher coordinate $\phi_i:(\mathbb{D},0) \to (U_i,a_i)$ with the
property that $N_p(\phi_i(z))=\phi_i(z^{k_i})$ for each $z \in
\mathbb{D}$ (the complex unit disk), where $k_i-1 \geq 1$ is the
multiplicity of $a_i$ as a critical point of $N_p$. The map $z \to
z^{k_i}$ fixes $k_i-1$ rays in $\mathbb{D}$. Under
$\phi_i$, these map to $k_i-1$ pairwise disjoint (except for endpoints) simple curves
$\Gamma^1_i,\Gamma^2_i,\ldots,\Gamma^{k_i-1}_{i}\subset U_i$ that
connect $a_i$ to $\infty$, are pairwise non-homotopic in $U_i$
(with homotopies fixing the endpoints) and are invariant under $N_p$. They
represent all accesses to $\infty$ of $U_i$ (see Proposition \ref{Prop:AccessesHSS}). It is clear that the union
\begin{center}
$\displaystyle \bigcup_{i} \bigcup_{j=1}^{k_{i}-1}
\overline{ \Gamma^{j}_{i} }$
\end{center}
forms a connected set in $\Cc$.

\begin{definition}[Channel diagram of a Newton map]\label{Def_ConcreteChannelDiagram}
The \emph{channel diagram} $\Delta$ associated to $N_p$ is the finite connected graph with  vertex set $\{\infty, a_1, a_2,...,a_d\}$ and edge set:
\[\displaystyle \bigcup_{i} \bigcup_{j=1}^{k_{i}-1}
\Big\{\overline{ \Gamma^{j}_{i} }\Big\}.\]
\end{definition}

It follows from the definition that $N_p(\Delta)= \Delta$. The channel diagram records the mutual locations of the immediate basins of $N_p$ and provides a first-level combinatorial information about the dynamics of the
Newton map. 

\begin{definition}[Level $n$ Newton graph]\label{Def_ConcreteNewtonGraph}
For any $n\ge 0$, denote by $\Delta_n$ the connected
component of $N^{-n}_p(\Delta)$ that contains $\Delta$. The pair
$(\Delta_n,N_p|_{\Delta_n})$ is called the \emph{Newton graph} of $N_p$ at level $n$. 
\end{definition}

We next state one of the main results in \cite[Theorem 3.4]{MRS}. The immediate goal of \cite{MRS} was an investigation of postcritically fixed Newton maps, but the following theorem was also proven there for arbitrary postcritically finite maps.
\begin{theorem} \label{Thm:PolesInGraph}
There exists a positive integer $\N$ so that $\Delta_\N$ contains all poles of $N_p$.
\end{theorem}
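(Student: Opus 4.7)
The plan is to argue by contradiction, first establishing the inductive structure of $\{\Delta_n\}$ and then using the postcritically finite hypothesis to rule out poles that are never captured. First, unwinding the definitions and using $N_p(\Delta) = \Delta$, one verifies $\Delta_n \subseteq \Delta_{n+1}$, $N_p(\Delta_{n+1}) = \Delta_n$, and $\Delta_{n+1}$ is exactly the connected component of $N_p^{-1}(\Delta_n)$ containing $\Delta_n$. Each $\Delta_n$ is a finite connected graph on $\mathbb{S}^2$ containing $\infty$, so every complementary component (a ``face'') is an open topological disk, and $N_p$ restricted to each face of $\Delta_{n+1}$ is a proper branched cover onto some face of $\Delta_n$, whose degree is controlled by Riemann-Hurwitz in terms of the critical points inside.

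The number of poles of $N_p$ contained in $\Delta_n$ is non-decreasing in $n$ and bounded by $d$, so it stabilizes. Suppose for contradiction this stabilization occurs strictly below the total pole count, producing a pole $p \neq \infty$ with $p \notin \Delta_n$ for every $n$. Let $U_n$ be the face of $\Delta_n$ containing $p$; then $U_{n+1} \subseteq U_n$. The key local observation is that $\infty \in \Delta \subset \Delta_n$ is a vertex of $\Delta_n$ with $\sum_i (k_i - 1)$ incident edges (the invariant accesses described earlier), and since $N_p$ has finite local degree at $p$, the preimage $N_p^{-1}(\Delta_n)$ contains a corresponding star of edges at $p$. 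These edges must form an ``island'' $I_n \subset U_n$, i.e., a connected component of $N_p^{-1}(\Delta_n)$ disjoint from $\Delta_n$ (else $p \in \Delta_{n+1}$). Iteration yields $I_n \subseteq I_{n+1}$, and $N_p(I_{n+1})$ is a connected subgraph of $\Delta_n$ containing $\infty$.

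The main obstacle is to rule out indefinite persistence of such islands, and here the postcritically finite hypothesis is decisive. The finite postcritical set $P_{N_p}$ bounds the number of critical orbits passing through $U_n$, which by Riemann-Hurwitz bounds the degree of $N_p|_{U_n}: U_n \to N_p(U_n)$ and so limits the combinatorial complexity available for an island inside $U_n$. On the other hand, the image $N_p(I_{n+1}) \subseteq \Delta_n$ must be an increasingly large connected subgraph of $\Delta_n$ as $n$ grows (since $\Delta_n$ itself grows while $N_p(I_{n+1})$ always contains the star at $\infty$), forcing $I_{n+1}$ to grow correspondingly under preimage. Balancing this growth against the bounded complexity inside $U_n$, the island must eventually extend to meet $\partial U_n \subset \Delta_n$, contradicting disjointness from $\Delta_n$. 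This growth-versus-capacity bookkeeping is the main technical step; applying it to each of the finitely many poles yields the desired uniform $N$.
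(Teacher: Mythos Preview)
The paper does not give its own proof of this theorem; it is quoted from \cite{MRS}, so there is no ``paper's proof'' to compare against directly. That said, your sketch can be evaluated on its own merits.

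Your preliminary setup is sound: the nesting $\Delta_n \subseteq \Delta_{n+1}$, the identification of $\Delta_{n+1}$ as the component of $N_p^{-1}(\Delta_n)$ containing $\Delta_n$, and the construction of the islands $I_n \subseteq I_{n+1}$ at the stray pole $p$ are all correct. In fact one can say more than you claim: since $I_n$ is a full connected component of $N_p^{-1}(\Delta_n)$, a path-lifting argument for the branched cover $N_p$ shows $N_p(I_n) = \Delta_n$, not merely containment in it; so the islands genuinely grow combinatorially without bound.

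The gap is precisely at what you yourself flag as ``the main technical step.'' You assert that unbounded growth of $I_n$, balanced against ``bounded complexity inside $U_n$'' coming from Riemann--Hurwitz, forces $I_n$ to meet $\partial U_n \subset \Delta_n$. This inference is not justified. Riemann--Hurwitz applied to the proper map $N_p|_{U_{n+1}}: U_{n+1} \to N_p(U_{n+1})$ bounds the \emph{degree} of that map---the number of preimages of each point---but it says nothing about the total combinatorial size of a subgraph sitting inside $U_{n+1}$. An arbitrarily large finite graph can be embedded in any open topological disk without touching the boundary, so the fact that $I_n \subset U_0$ grows without bound is not, by itself, a contradiction. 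Your phrase ``growth-versus-capacity bookkeeping'' names the needed argument but does not supply it; some further ingredient (an expansion estimate in a suitable metric, or a sharper combinatorial count relating faces, islands, and critical points across levels, as in \cite{MRS}) is required to actually force the island to merge with the main component. As written, the argument does not close.
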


A \emph{prepole} is defined to be a preimage of $\infty$. As an immediate corollary we see that each prepole is contained in a Newton graph of sufficiently high level (see \cite[Corollary 3.5]{MRS}).

\begin{corollary}[Prepoles in Newton graph]\label{Cor:PrepolesInNewtonGraph}
Let $m\geq 0$ be an integer, and let $\N$ be as in Theorem \ref{Thm:PolesInGraph}. Then every point in $N_p^{-(m+1)}(\infty)$ is a vertex of $\Delta_{m+\N}$.
\end{corollary}

The immediate goal of \cite{MRS} was to give a classification of postcritically fixed Newton maps in terms of abstract Newton graphs.  However, along the way it was largely shown that the Newton graph of postcritically \emph{finite} Newton maps also satisfy the axioms.  The pertinent definitions and theorem are presented here.

\begin{definition}[Abstract channel diagram]
\label{Def_ChannelDiagram} An \emph{abstract channel diagram} of
degree $d\geq 3$ is a finite graph $\Delta$ in $\S^2$ with vertices
$v_\infty,v_1,\dots,v_d$ and edges $e_1,\dots,e_l$ that satisfies the
following properties:
\begin{enumerate}
\item[(1)] $l\leq 2d-2$;
\item[(2)] each edge joins $v_\infty$ to some $v_i$ for $i\in\{1,...,d\}$;
\item[(3)] each $v_i$ is connected to $v_\infty$ by at least one edge;
\item[(4)] \label{necessaryCondition} if $e_i$ and $e_j$ both join $v_\infty$ to $v_k$, then each connected component of
$\S^2\setminus \ol{e_i\cup e_j}$ contains at least one vertex of
$\Delta$.
\end{enumerate}
\end{definition}

It is not difficult to check that the channel diagram $\Delta$ constructed for
a Newton map $N_p$ above satisfies conditions of Definition
\ref{Def_ChannelDiagram}. Indeed by construction, $\Delta$ has at
most $2d-2$ edges and it satisfies (2) and (3). Finally, $\Delta$
satisfies (4), because for any immediate
basin $U_{\xi}$ of $N_p$, every component of $\C\setminus U_\xi$
contains at least one fixed point of $N_p$ \cite[Corollary 5.2]{RS}.

The classification of postcritically fixed Newton maps is given in terms of a combinatorial object called the ``abstract Newton graph" \cite{MRS}. We define the term almost identically except that Condition 3 is relaxed from equality to an inequality (this corresponds to the fact that postcritically finite maps may have critical points that are not eventually fixed).

\begin{definition}[Abstract Newton graph]
\label{Def_NewtonGraph} Let $\Gamma$ be a connected finite
graph in $\S^2$ with vertex set $V(\Gamma)$ and $f:\Gamma\to\Gamma$ a
graph map. The pair $(\Gamma,f)$ is called an \emph{abstract Newton
graph of level $\N_{\Gamma}$} if it satisfies the following conditions:
\begin{enumerate}
\item[(1)] There exists $d_{\Gamma}\geq 3$ and an abstract channel diagram
$\Delta\subsetneq\Gamma$ of degree $d_\Gamma$ such that
 $f$ fixes each vertex and each edge of $\Delta$ (pointwise).

\item[(2)] If $v_\infty, v_1, \dots,v_{d_\Gamma}$ are the vertices of $\Delta$, then
$v_i\in \ol{\Gamma\setminus\Delta}$ if and only if $i\neq \infty$.
 Moreover, there are exactly $\deg_{v_i}(f)-1\geq 1$ edges in $\Delta$ that connect $v_i$ to $v_\infty$ for
 $i\neq \infty$.

\item[(3)] $\sum_{x\in V(\Gamma)} \left(\deg_x f-1\right) \leq 2d_{\Gamma}-2$. 

\item[(4)] $\N_\Gamma$ is the minimal integer so that $f^{\N_\Gamma-1}(v)\in\Delta$ for all $v\in V(\Gamma)$ with $\deg_v f>1$.

\item[(5)] $f^{\N_\Gamma}(\Gamma)\subset\Delta$

\item[(6)] For every $v\in V(\Gamma)$ with $f^{\N_\Gamma-1}(v)\in\Delta$, the number of adjacent edges in $\Gamma$ equals $\deg_v f$ times the number of edges adjacent to $f(v)$.

\item[(7)] The graph $\overline{\Gamma\setminus\Delta}$ is connected.

\item[(8)] \label{Cond_Extension} For every vertex $y\in V(\Gamma)$ and every component $U$ of $\S^2\setminus\Gamma$, the local extension $\tilde{f}$ from Equation (\ref{eqn:localExtension}) is injective on
$\bigcup_{v\in f^{-1}(y)} U_v \cap U\;.$

\end{enumerate}
\end{definition}

It follows from \cite{MRS} and \cite[Corollary 3.2]{DLSS} that if $N_p$ is a postcritically \emph{finite} Newton map, then the pair $(\Delta_{\N},N_p)$ satisfies all conditions of an abstract Newton graph (Definition \ref{Def_NewtonGraph}) for large enough $\N$ (cf. the weaker Theorem 1.5 from \cite{MRS} which only applied to postcritically \emph{fixed} maps)

\begin{theorem}\label{Thm_NewtonGraph}
For every postcritically finite Newton map $N_p$, there exists some minimal level $\N$ so that  $(\Delta_k,N_p)$ is an abstract Newton graph of level $k$ for all $k\geq \N$.
\end{theorem}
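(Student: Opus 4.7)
The strategy is to adapt the proof of Theorem 1.5 of \cite{MRS} from the postcritically fixed to the postcritically finite setting. The only conceptual differences are that condition (5) of Definition \ref{Def_NewtonGraph} becomes an inequality rather than an equality, and the extension in condition (2) need not be regular; both relaxations accommodate free critical points, whose forward orbits miss $\Delta_k$ for every $k$.

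First I would fix the level $N$ explicitly. Let $N_0$ be the integer from Theorem \ref{Thm:PolesInGraph}, so that $\Delta_{N_0}$ contains every pole of $N_p$. For each non-free critical point $c$ the forward orbit eventually reaches a finite fixed point and hence enters $\Delta$; let $m_c\geq 1$ be the least integer with $N_p^{\circ(m_c-1)}(c)\in\Delta$. Since $N_p$ has only finitely many critical points, $N_1:=\max_c m_c$ taken over non-free $c$ is finite. Set $N:=\max\{N_0,N_1\}$, fix $k\geq N$, and write $\Gamma:=\Delta_k$, $f:=N_p|_\Gamma$.

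I would then verify the six conditions of Definition \ref{Def_NewtonGraph} in turn. Condition (1) holds since the discussion following Definition \ref{Def_ChannelDiagram} shows that $\Delta$ is an abstract channel diagram, and each invariant access $\Gamma_i^j$ is fixed pointwise because the B\"ottcher coordinate $\phi_i$ conjugates $N_p|_{U_i}$ to $z\mapsto z^{k_i}$, whose $k_i-1$ invariant internal rays are fixed pointwise. For (2) I take $\tilde f:=N_p$ itself, a degree-$d$ branched cover of $\widehat{\C}$ extending $f$. Condition (3) is immediate from the definition of $\Delta_k$. Condition (4) follows from Proposition \ref{Prop:AccessesHSS} together with the observation that $\infty$ has no preimages in $\C$, so every edge of $\Gamma$ meeting $\infty$ already lies in $\Delta$, while each $v_i=a_i$ has local degree $k_i\geq 2$ and therefore acquires additional preimage edges in $\Delta_k\setminus\Delta$. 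For (5), every vertex of $\Gamma'$ at which $\tilde f$ is critical is a critical point of $N_p$, so Riemann--Hurwitz yields
\[
\sum_{x\in\Gamma'}\bigl(\deg_x(\tilde f)-1\bigr)\;\leq\;\sum_{c\in C_{N_p}}\bigl(\deg_c N_p-1\bigr)\;=\;2d-2,
\]
with equality exactly when no free critical point exists.

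The main obstacle will be condition (6), connectedness of $\ol{\Gamma\setminus\Delta}$. I would proceed by induction on $k\geq N$. For the base case $k=N$, Theorem \ref{Thm_PreimagesConnected} shows that every component of $\Delta_N\setminus\Delta$ lying inside a basin closure can be joined to $v_\infty=\infty$ through a chain of basin closures; since $k\geq N_0$, every pole needed to route such a chain is already a vertex of $\Gamma$, so the chain is realized inside $\ol{\Gamma\setminus\Delta}$. For the inductive step, each new edge of $\Delta_{k+1}\setminus\Delta_k$ is an $N_p$-preimage of an edge of $\Delta_k\setminus\Delta_{k-1}$ and attaches at a vertex already in $\Delta_k$, so connectedness is preserved. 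Once (1)--(6) hold for all $k\geq N$, the set of such $k$ is an upward-closed subset of $\N$ and hence admits a minimum, giving the minimal level asserted in the theorem.
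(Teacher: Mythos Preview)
Your plan is essentially what the paper does: the paper gives no independent proof of Theorem \ref{Thm_NewtonGraph} but simply states that it is a restatement of Theorem 1.5 of \cite{MRS}, carried over from the postcritically fixed to the postcritically finite setting, with $N$ chosen so that every non-free critical point reaches $\Delta$ in at most $N-1$ steps. Your outline of how that adaptation goes (the inequality in (5), and the extension in (2) not being regular to accommodate free critical points) is exactly the point.

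One small slip in your sketch of condition (6): you route chains to $v_\infty=\infty$ and then assert they lie inside $\ol{\Gamma\setminus\Delta}$. But condition (4) says precisely that $\infty\notin\ol{\Gamma\setminus\Delta}$: since $\infty$ is a simple (non-critical) fixed point of $N_p$, every edge of $\Delta_k$ incident to $\infty$ is already an edge of $\Delta$. The chains should instead terminate at one of the finite fixed vertices $a_i$, or be routed through the poles, all of which do lie in $\ol{\Gamma\setminus\Delta}$ once $k\geq N_0$. With that correction the argument goes through as in \cite{MRS}.
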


Note that the level $\N$ in this theorem is not necessarily the level of the Newton graph chosen in the construction of the extended Newton graph (see Definition \ref{Def_TheNewtonGraphPCFMap}) though it does give a lower bound.

The extended Newton graph that we will associate to a Newton map is a finite graph $\Delta^*_\N$ equipped with a self-map coming from the restriction of $N_p$ (Definition \ref{defn_extendedNewtonGraph}).  This restriction is not a graph map in general since Newton ray edges can contain finitely many preimages of vertices in the Newton graph that are not vertices in $\Delta^*_\N$.  This motivates the following weaker definition where the condition on preimages of vertices has been dropped.

\begin{definition}[Weak graph map]
 A continuous map $f:\Gamma_1 \to \Gamma_2$ is called a \emph{weak graph map} if it is injective on each edge of the graph $\Gamma_1$ and the 
image of each vertex is a vertex.
\end{definition}

\begin{remark}\label{rem_weakGraphMap}
Given a weak graph map $f:\Gamma_1 \to \Gamma_2$, we associate a canonical graph map. Denote by $V_2$ the set of vertices of $\Gamma_2$. Let $\hat{\Gamma}_1$ be the graph with vertex set $f^{-1}(V_2)$ and edges given by the closures of complementary components of $\Gamma_1\setminus f^{-1}(V_2)$. We take $\hat{f}:=f$. Forward and backward invariance of vertices under $\hat{f}$ is immediate.  Since each edge $e$ in $\hat{\Gamma}_1$ is a subset of an edge in $\Gamma_1$ and $f$ is injective on edges in $\Gamma_1$, the restriction $\hat{f}|_e=f|_e$ must also be injective.   It follows that $\hat{f}:\hat{\Gamma}_1\rightarrow\Gamma_2$ is a graph map.
\end{remark}


\section{Hubbard trees from Newton maps} \label{Sec_RenormalizationNewton} 

The Newton graph of the previous section contains all fixed postcritical points of a Newton map as well as all eventually fixed postcritical points. In this section we use Hubbard trees to locally model Newton map dynamics about higher-period postcritical points. 

\subsection{Extended Hubbard trees} \label{Sec_HubbardTrees}

Douady and Hubbard \cite{DH84} showed how to extract from any postcritically finite polynomial a combinatorial invariant called the Hubbard tree, and it was shown that such trees distinguish inequivalent polynomials.  The complete classification of postcritically finite polynomials in terms of Hubbard trees is given in \cite{Poirier}.

A \emph{tree} is a topological space which is uniquely arcwise
connected and homeomorphic to a union of finitely many copies of the
closed unit interval. All trees are assumed to be embedded in $\S^2$.

Let $f$ be a complex polynomial.  Define the
\emph{filled Julia set} $K(f)$ to be the set of $z
\in \C$ so that the forward orbit of $z$ under $f$ is bounded. The \textit{Julia set} $J(f)$ is the boundary
of $K(f)$.

We recall some facts about the dynamics of postcritically finite polynomials; see e.g. \cite{MilnorBook}. 
 For each bounded Fatou component $U_i$, there is exactly one point $x \in U_i$ such that $f^n(x) \in P_f$ for some non-negative integer $n$.  We call $x$ the \emph{center} of $U_i$. Denote by $U_{i+1}$ the Fatou component containing $f(x)$. A classical theorem of B\"ottcher implies that there are holomorphic isomorphisms $\phi_i: (\D,0) \to (U_i,x)$ and $\phi_{i+1}: (\mathbb{D},0) \to (U_{i+1},f(x))$ such that for all $z \in \D$:
\[
\phi_{i+1}(z^{k_i}) = f(\phi_i(z)),
\]
where $k_i$ is the local degree degree of $f$ near $x$.
If $f$ is a postcritically finite polynomial, then the Julia set $J(f)$ is a connected and locally connected compact set \cite{DH84}. Since each Fatou component has locally connected boundary, Caratheodory's theorem implies that the map $\phi_i$ extends continuously to the unit circle. Let $R(t) = \{r \exp(2\pi i t) | 0 \leq r \leq 1 \}$ be the ray of angle $t$ in $\mathbb{D}$. The image $R_i(t) = \phi_i(R(t))$ is called the \emph{ray of angle $t$ in $U_i$}. If $x=\infty$, the ray $R_i(t)$ is called an \emph{external ray}, otherwise it is called \emph{internal ray}.

We now describe the construction of Hubbard trees for a postcritically finite polynomial $f$ following the second chapter of \cite{DH84}.  A Jordan arc $\gamma \subset K(f)$ is called \emph{allowable} if for every Fatou component $U_i$, the set $\phi_i^{-1}(\gamma \cap \overline{U_i})$ is contained in the union of two internal rays of $\overline{\mathbb{D}}$. They show that for every $z,z'$ in $K(f)$ there is a unique
allowable arc joining them \cite[Proposition 2.6]{DH84}. We denote this arc by $[z,z']_{K(f)}$. We
say that a subset $X \subset K(f)$ is \emph{allowably connected} if
for every $z_1,z_2 \in X$ we have $[z_1,z_2]_{K(f)} \subset X$. 
The intersection of a family of allowably connected subsets is allowably connected.
The \emph{allowable hull} $[X]_K$ of $X \subset K(f)$ is defined to be the intersection of all the allowably connected subsets of $K(f)$ containing $X$.  If $X$ is a finite set, then the allowable hull $[X]_K$ is a topological finite tree \cite[Proposition 2.7]{DH84}.

In the following definition (\cite[Definition I.1.9]{Poirier}), $C_f$ denotes the set of critical points.

\begin{definition}[Hubbard tree]
\label{HubbardTree} Let $f$ be a postcritically finite polynomial, and let $M$ be a finite forward invariant set with $C_f\subset M$.  The \textit{Hubbard tree} $H(M)$ is the tree generated by $M$, i.e.\ the allowable hull $[M]_K$.
\end{definition}

Typically $M=P_f$ in the literature.  We will often wish to include other points as discussed below.

These Hubbard trees (including those with additional marked points) are axiomatized as \emph{abstract Hubbard trees} in Section II.4 of \cite{Poirier} (see also \cite{Poi}).   Poirier assigns a degree to each Hubbard tree in terms of local degree of the tree dynamics (he always assumes that the degree is greater than one).  Under a natural partial ordering on abstract Hubbard trees, Poirier shows that there is a unique minimal abstract Hubbard tree that is in fact the tree generated by the orbit of $C_f$ \cite[Proposition II.4.5]{Poirier}.   An equivalence relation on abstract Hubbard trees is given in \cite[Definition II.4.3]{Poirier}, where two trees are equivalent if they are homeomorphic and the dynamics are respected (among other things, this means the dynamics on vertices are conjugate).  

We now state two essential theorems, begining with the crucial realization theorem for Hubbard trees \cite[Theorem II.4.7]{Poirier}.

\begin{theorem} [Realization of abstract Hubbard trees]\label{Thm_RealizeAbstrHubTree}
For every abstract Hubbard tree $H$, there exists a unique (up to affine conjugacy) postcritically finite polynomial $f$ such that $H(M)$ is equivalent to $H$ where $M\supset C_f$ is a finite forward-invariant set.
\end{theorem}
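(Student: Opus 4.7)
The plan is to invoke Thurston's characterization theorem, which is the standard tool for such realization results. First I would build a topological branched self-cover of $\S^2$ that encodes the abstract Hubbard tree $H$. Given the combinatorial data of $H$ (the embedded tree, the self-map on vertices, the local degree function, and the cyclic order of edges at each vertex), I would enlarge $H$ to an auxiliary graph by adjoining a distinguished vertex $v_\infty$ together with $d-1$ additional edges emanating from it, chosen so that the total critical data satisfies Riemann--Hurwitz with a totally ramified fixed point at $v_\infty$ of local degree $d$. On this enlarged graph the dynamics extend naturally as a graph map, and then the angular extension procedure recalled before Proposition \ref{Prop_RegExt} produces an orientation-preserving branched cover $F:\S^2 \to \S^2$ whose critical points lie on $H\cup\{v_\infty\}$ and whose post-critical set is the marked set $M$ together with $v_\infty$.

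Next I would show that $F$ is unobstructed in the sense of Thurston, i.e.\ carries no invariant multicurve with Thurston matrix spectral radius $\ge 1$. Because $H$ is a (contractible) tree and $v_\infty$ is a totally ramified fixed point, any essential simple closed curve in $\S^2\setminus P_F$ separates $P_F\setminus\{v_\infty\}$ into two non-empty subsets each sitting in an allowably connected sub-tree. I would use this to argue that every invariant multicurve is a Levy multicurve (this is where one reduces to the polynomial case of Thurston's theorem), and then rule out Levy cycles by a transport argument along the tree: a Levy cycle would force an eventually periodic proper sub-tree that is a strict invariant sub-arc of $H$, contradicting expansiveness of the local degree $\ge 2$ dynamics on the convex hull of $C_f$ built into the abstract Hubbard tree axioms.

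Thurston's theorem then supplies a rational map $f$ Thurston equivalent to $F$, unique up to M\"obius conjugacy. Since $F$ has a totally ramified fixed point at $v_\infty$, its rational realization does too, so $f$ is affine conjugate to a polynomial. I would now transport the combinatorial tree $H$ to the dynamical plane of $f$ via the Thurston equivalence, isotope it rel $P_f$ onto the allowable hull of $M$ in $K(f)$, and verify that the induced dynamics on vertices and the local degree data match, so $H(M)\simeq H$ as abstract Hubbard trees.

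For uniqueness, given two polynomials $f_1,f_2$ realizing $H$, the tree equivalence gives a homeomorphism of trees conjugating the dynamics together with matching cyclic order at each vertex. This homeomorphism extends to a Thurston equivalence between $f_1$ and $f_2$ (thicken the trees into regular neighborhoods and use the Alexander trick of Lemma \ref{Lem:AlexanderTrick} to match complementary components), and by Thurston rigidity in the polynomial case $f_1$ and $f_2$ are affine conjugate. The main obstacle is the ruling-out of Thurston obstructions in the second step; the polynomial setting is favorable because $\infty$ is a super-attracting fixed point, and the dynamical tree provides a rigid combinatorial skeleton that constrains how any invariant multicurve can sit in $\S^2\setminus P_F$, but this still requires a careful case analysis along the orbits of $M$ on the tree.
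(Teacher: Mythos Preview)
The paper does not prove this theorem at all: it is quoted as \cite[Theorem II.4.7]{Poirier} and used as a black box. So there is no ``paper's own proof'' to compare against; you are supplying an argument where the authors simply cite the literature.

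That said, your outline is the standard route and is essentially the strategy carried out in \cite{Poirier} (and, for the preperiodic case, in \cite{BFH}): build a topological polynomial from the tree data, invoke the Levy/Berstein reduction that any Thurston obstruction for a topological polynomial contains a Levy cycle, rule out Levy cycles using the expansivity axiom built into the definition of abstract Hubbard tree, and conclude existence and uniqueness via Thurston's theorem. Two places where your sketch is thinner than the actual proof: (i) the construction of the branched cover from the tree is more delicate than ``adjoin $v_\infty$ with $d-1$ edges''---one has to use the angle data at vertices (Poirier's \emph{angled} trees) to place the external legs so that the resulting graph map really satisfies the local injectivity condition of Proposition~\ref{Prop_RegExt}, and one must treat Fatou-type (periodic critical) vertices differently from Julia-type vertices; (ii) your Levy-cycle argument (``would force an eventually periodic proper sub-tree'') is morally right but needs the precise expansivity axiom: a Levy cycle would yield a periodic arc in $H$ mapped homeomorphically to itself by an iterate, and expansivity says exactly that no such arc exists unless its endpoints are Fatou vertices, which then forces the enclosed curve to be inessential. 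These are not gaps in strategy, just places where the actual work lives.
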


Next we give the classification theorem which is essentially a consequence of the construction of Hubbard trees, the realization theorem, and minimality \cite[Theorem II.4.8]{Poirier}. 

\begin{theorem}[Classification of postcritically finite polynomials]\label{Thm_ExtendedHubbardTreesBijectiveCorr}
The set of affine conjugacy classes of postcritically finite polynomials of degree at least two is in bijective correspondence with the set of equivalence classes of minimal abstract Hubbard trees. 
\end{theorem}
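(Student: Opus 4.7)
The plan is to construct a map $\Phi$ sending the affine conjugacy class of a postcritically finite polynomial $[f]$ of degree at least two to the equivalence class of its canonical minimal abstract Hubbard tree, and then verify $\Phi$ is a bijection by leveraging the realization result (Theorem \ref{Thm_RealizeAbstrHubTree}) together with Poirier's minimality proposition (Proposition II.4.5 in \cite{Poirier}).

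First, I would check that $\Phi$ is well-defined. Given $f$ postcritically finite with $\deg f \geq 2$, set $M = C_f \cup P_f$, the smallest forward-invariant finite set containing $C_f$; then $H(M) = [M]_{K(f)}$ is by Poirier's minimality proposition the unique minimal element in the partial order on abstract Hubbard trees associated to the dynamics of $f$. An affine conjugacy $\psi(z) = az+b$ carrying $f_1$ to $f_2$ sends the filled Julia set, critical set, and internal/external ray structure of $f_1$ bijectively to those of $f_2$, and hence restricts to a homeomorphism of their minimal Hubbard trees which conjugates the tree dynamics and preserves cyclic order at each vertex. So $\Phi$ descends to affine conjugacy classes.

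For surjectivity, let $H$ be a minimal abstract Hubbard tree. By Theorem \ref{Thm_RealizeAbstrHubTree} there exists a postcritically finite polynomial $f$ and a finite forward-invariant set $M \supset C_f$ such that $H(M)$ is equivalent to $H$. Comparing this $H(M)$ to the canonical minimal tree generated by $C_f$ alone (which is a sub-Hubbard-tree of $H(M)$ already supporting the full critical-orbit dynamics), the minimality of $H$ in its equivalence class forces these two abstract trees to be equivalent, so $\Phi([f]) = [H]$. For injectivity, suppose $f_1$ and $f_2$ are postcritically finite polynomials whose canonical minimal Hubbard trees are equivalent; invoking the uniqueness clause of Theorem \ref{Thm_RealizeAbstrHubTree} (the realizing polynomial is unique up to affine conjugacy), we conclude $f_1$ and $f_2$ are affinely conjugate.

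The hard part is already encapsulated in Theorem \ref{Thm_RealizeAbstrHubTree}, whose proof ultimately rests on Thurston's characterization theorem and which we take as given. With that in hand, the only residual subtlety is in the surjectivity step: ensuring that when the realization theorem produces a polynomial $f$ with $H(M)$ equivalent to the prescribed minimal tree $H$ for some auxiliary forward-invariant $M$, the canonical minimal tree for $f$ (generated by $C_f$ itself) still lies in the same equivalence class as $H$. This reduces to the uniqueness of the minimal element in the partial order of abstract Hubbard trees for the fixed dynamical system of $f$, so no further obstruction arises beyond what is contained in the cited results of \cite{Poirier}.
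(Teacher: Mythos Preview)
Your proposal is correct and follows essentially the same approach as the paper, which does not give a detailed proof but simply states that the theorem ``is essentially a consequence of the construction of Hubbard trees, the realization theorem, and minimality'' and cites \cite[Theorem II.4.8]{Poirier}. You have spelled out precisely those ingredients: the well-definedness of the map on affine conjugacy classes, surjectivity via Theorem~\ref{Thm_RealizeAbstrHubTree} combined with the uniqueness of the minimal tree (Poirier's Proposition II.4.5), and injectivity via the uniqueness clause of the realization theorem.
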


We must now give an analogous exposition for polynomials where all cycles up to a certain length are marked.  The reason is that a Newton map may have a non-fixed polynomial-like map, and we would like to use a ray to connect a repelling periodic point in the Hubbard tree of this polynomial-like map to the Newton graph.  It is also necessary at times to mark cycles of longer length so that we may maintain combinatorial control of the free critical points of the Newton map that map into repelling cycles of filled Julia sets.  

The set of marked points for the polynomial $f$ including cycles of length $n$ or less is denoted
\[M_n=C_f\cup P_f\cup\bigcup_{i=0}^n\{z\in K(f)|f^{i}(z)=z\}.\]

\begin{definition}[Extended Hubbard tree]\label{def_ExtendedHubTree} An \emph{extended Hubbard tree} is a Hubbard tree of the form $H(M_n)$ where $n\geq 1$.  We say that $H(M_n)$ has \emph{cycle type} $n$.
\end{definition}

\begin{remark}  If $f:\Cc\to\Cc$ is a degree one holomorphic map with a unique finite repelling fixed point $z_0\in\mathbb{C}$, the extended Hubbard tree $H(M_n)$ consists of the point $z_0$ equipped with the identity map for all $n$ (see Definition \ref{HubbardTree}).  If an extended Hubbard tree consists of a single point, it is said to be \textit{degenerate}. The only polynomials with degenerate Hubbard trees are degree one.
\end{remark}

As mentioned, the definition of abstract Hubbard tree (Section II.4 \cite{Poirier}) allows for marked points beyond the postcritical set. For an abstract extended Hubbard tree $H$, let $deg(H)$ be the degree of the polynomial associated to $H$ by Theorem \ref{Thm_ExtendedHubbardTreesBijectiveCorr}.
\begin{definition}
An \emph{abstract extended Hubbard tree} is an abstract Hubbard tree $H$ whose vertex set has $(deg(H))^k$ points that are fixed by the graph map for each $1\leq k\leq n$. Such a tree is said to have cycle type $n$.
\end{definition}

The partial order on abstract Hubbard trees defined by \cite[Definition II.4.2]{Poirier} induces an order on abstract extended Hubbard trees of fixed degree and fixed cycle type $n$.  In analogy to \cite[Proposition II.4.5]{Poirier}, we conclude that there is a unique minimal abstract extended Hubbard tree under this partial order, namely the tree generated by the points in cycles of length $n$ or less.  By convention, the minimal degree one abstract extended Hubbard tree is the degenerate Hubbard tree.

Since each extended abstract Hubbard tree is in fact an abstract Hubbard tree (except in degree one where realization is evident anyway), we may apply Theorem \ref{Thm_RealizeAbstrHubTree} (Poirier's realization theorem) to abstract extended Hubbard trees.

\subsection{Polynomial-like maps and renormalization} \label{Sec_PolynomialLike}

Polynomial-like maps were introduced by Douady and Hubbard \cite{DH85} and have played an important role in complex dynamics ever since.  They will be used in Section \ref{Sec:RenormOfNewtonMaps} to model the dynamics close to critical points whose orbit does not intersect the channel diagram.

\begin{definition}\label{Def_PolyLikeMaps} A \emph{polynomial-like} map of degree $d\geq 1$ is a triple
$(f,U,V)$ where $U,\,V$ are open topological disks in $\Cc$, 
the set $\overline{U}$ is a compact subset of $V$, and $f: U \to
V$ is a proper holomorphic map such that every point in $V$ has $d$
preimages in $U$ when counted with multiplicities.
\end{definition}
\begin{remark}\label{rmk:DegenerateHubTrees}
The above definition differs slightly from the typical one found in the literature, as we allow that $d=1$.  Such a map is called a \emph{degenerate polynomial-like map}.  The following two theorems are stated in their original sources for $d\geq 2$, but we include the $d=1$ case without justification, as the proof in this case is trivial.
\end{remark}

\begin{definition}[Filled Julia set] Let $f: U \to V$ be a polynomial-like map. The \emph{filled
Julia set} of $f$ is the set of points in $U$ that never
leave $V$ under iteration of $f$, i.e.
\begin{center}
$\displaystyle K(f,U,V) = \bigcap_{n=1}^\infty f^{-n}(V)$.
\end{center}
When context permits, the set will more simply be denoted $K(f)$. As with polynomials, we define the \emph{Julia set} as
$J(f)=\partial{K(f)}$.
\end{definition}

The simplest example of a polynomial-like map comes from restricting an actual polynomial: let $p$ be a polynomial of degree $d\geq 2$, let $V=\{z\in \mathbb{C}: |z|<R \}$ for sufficiently large $R$ and $U=f^{-1}(V)$. Then $p: U \to V$ is a polynomial-like mapping of degree $d$.

Two polynomial-like maps $f$ and $g$ are \emph{hybrid equivalent} if
there is a quasiconformal conjugacy $\psi$ between $f$ and $g$ that is
defined on a neighborhood of their respective filled Julia sets
so that $\bar{\partial}\psi = 0$ on $K(f)$.

The crucial relation between polynomial-like maps and polynomials is
explained in the following theorem, due to Douady and Hubbard \cite{DH85}. 

\begin{theorem}[The straightening theorem]\label{thm:straightening}
Let $f: U' \to U$ be a polynomial-like map of degree $d$.
Then $f$ is hybrid equivalent to a polynomial $P$ of degree $d$.
Moreover, if $K(f)$ is connected and $d\geq 2$, then $P$ is unique up to affine
conjugation.
\end{theorem}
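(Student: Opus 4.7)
The plan is to follow Douady and Hubbard's original strategy: extend $f$ to a quasiregular self-map $F \colon \Cc \to \Cc$ of degree $d$ that is holomorphic outside a bounded annulus, and then straighten an $F$-invariant complex structure via the measurable Riemann mapping theorem. For the extension, set $A = U \setminus \ol{U'}$, a topological annulus. Choose a conformal isomorphism $\alpha \colon \Cc \setminus U \to \{|w| > \rho^d\}$ (with $\alpha(\infty) = \infty$), and a quasisymmetric homeomorphism $\gamma \colon \partial U' \to \{|w| = \rho\}$ satisfying $\alpha \circ f = (w \mapsto w^d) \circ \gamma$ on $\partial U'$; such a $\gamma$ exists because $f|_{\partial U'}$ and $w \mapsto w^d$ are both degree-$d$ coverings of a circle. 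Extend $\gamma$ quasiconformally across $A$ to produce a quasiconformal $\Phi \colon \Cc \setminus U' \to \Cc \setminus \{|w| < \rho\}$ that agrees with $\alpha$ on $\Cc \setminus U$ and with $\gamma$ on $\partial U'$ (this interpolation uses a version of the Alexander trick, cf.\ Lemma~\ref{Lem:AlexanderTrick}). Define $F = f$ on $\ol{U'}$ and $F = \Phi^{-1} \circ (w \mapsto w^d) \circ \Phi$ on $\Cc \setminus U'$; the two definitions match on $\partial U'$ and yield a quasiregular branched cover of $\Cc$ of degree $d$, holomorphic on the interior of $U'$ (hence near $K(f)$) and on $\Cc \setminus U$ (hence near infinity).

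Next, I would construct an $F$-invariant Beltrami coefficient $\mu$ on $\Cc$: set $\mu = 0$ on the holomorphic region $K(f) \cup (\Cc \setminus U)$, declare $\mu$ on $A$ to be the pullback of the standard complex structure via $\Phi$, and extend to each $F^{-n}(A)$ by pulling back under $F^n$. Since $F$ is holomorphic off the single annulus $A$, no orbit visits the region of nontrivial dilatation more than once, so $\|\mu\|_\infty < 1$. The measurable Riemann mapping theorem yields a quasiconformal $\psi \colon \Cc \to \Cc$ whose Beltrami coefficient equals $\mu$, and then $P := \psi \circ F \circ \psi^{-1}$ has vanishing Beltrami coefficient, so by Weyl's lemma $P$ is rational of degree $d$. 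Normalizing $\psi(\infty) = \infty$ makes $P$ fix $\infty$ with local degree $d$, so $P$ is a polynomial; since $\mu \equiv 0$ on $K(f)$, the restriction $\psi|_{K(f)}$ is conformal, and $\psi$ is the required hybrid conjugacy.

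For uniqueness, suppose $P_1, P_2$ are two degree-$d$ polynomials both hybrid equivalent to $f$. Composing the two hybrid conjugacies yields a quasiconformal $h \colon \Cc \to \Cc$ conjugating $P_1$ to $P_2$ with $\bar\partial h = 0$ almost everywhere on $K(P_1)$. Since $K(f)$ is connected and $d \geq 2$, both $K(P_i)$ are connected and admit B\"ottcher isomorphisms $\beta_i \colon \Cc \setminus K(P_i) \to \Cc \setminus \ol{\D}$ conjugating $P_i$ to $w \mapsto w^d$. The map $g := \beta_2 \circ h \circ \beta_1^{-1}$ is a quasiconformal self-map of $\Cc \setminus \ol{\D}$ commuting with $w \mapsto w^d$; iterating the commutation relation and exploiting the uniform expansion of $w \mapsto w^d$, a normal-families argument forces $g$ to be a rotation $w \mapsto cw$ with $c^{d-1} = 1$. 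Replacing $h$ on $\Cc \setminus K(P_1)$ by the conformal map $\beta_2^{-1} \circ g \circ \beta_1$ produces a map $\tilde h$ that is conformal on $K(P_1)$ and on its complement; since it agrees continuously on $\partial K(P_1)$ and is globally quasiconformal, $\bar\partial \tilde h = 0$ almost everywhere on $\Cc$, so by Weyl's lemma $\tilde h$ is M\"obius, and the normalization at infinity forces it to be affine. I expect the main obstacle to be this last step: verifying that $\tilde h$ is globally quasiconformal across the Julia set (a removable-singularities argument using that $\partial K(P_1)$ is removable for quasiconformal maps of matching boundary values), together with rigorously executing the normal-families argument that pins $g$ down to a rotation.
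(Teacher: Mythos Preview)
The paper does not prove this theorem at all: it is stated as a classical result and attributed to Douady and Hubbard \cite{DH85}, with no argument given. Your sketch is the standard Douady--Hubbard proof from that reference (quasiregular extension via interpolation across the fundamental annulus, invariant Beltrami coefficient, measurable Riemann mapping theorem, and for uniqueness the B\"ottcher-coordinate replacement outside $K$ followed by a removability argument across the Julia set), and the outline is correct.

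One small point worth tightening: when you write that composing the two hybrid conjugacies ``yields a quasiconformal $h \colon \Cc \to \Cc$'', this is premature, since each hybrid conjugacy is only defined on a neighborhood of the filled Julia set. The honest version is that you first get a quasiconformal conjugacy $h$ defined near $K(P_1)$, and it is precisely the B\"ottcher replacement you describe next that extends it (after modification) to a global map $\tilde h$; the removability step you flag as the main obstacle is then exactly what is needed, and your identification of it is accurate.
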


As an immediate consequence of the first part of the theorem, it follows that $K(f)$ is connected if and only if $K(f)$ contains the critical points of $f$. Now we define the notion of renormalization of rational functions. Let $R$ be a rational function of degree $d$ and let $z_0$ be a critical or postcritical
point of $R$.  

\begin{definition}
\label{Def_Renorm} $R^n$ is called \emph{renormalizable about} $z_0$
if there exist open disks $U,V \subset \C$ satisfying the following
conditions:
\begin{enumerate}
\item $z_0\in U$.
\item $(R^n,U,V)$ is a polynomial-like map with critical points contained in the filled Julia set.
\end{enumerate}
Such a triple $\rho = (R^n,U,V)$ is called a \emph{renormalization}.
\end{definition}

\begin{remark}
The usual definition of renormalization requires that $(R^n,U,V)$ is polynomial-like in the standard sense, i.e. of degree at least 2. We expand the definition to include degree 1  maps for critical points that eventually land on repelling periodic points. The common theme is that every renormalization captures the dynamics of some critical orbit(s) that are eventually periodic, at least in a combinatorial sense (with respect to the partition induced by the Newton graph of some high level). Renormalizations of degree 1 that do not capture postcritical points are of no interest for us.
\end{remark}

In Section \ref{Sec_HubbardTrees} the notion of extended Hubbard trees for a given postcritically finite polynomials was introduced. Note that the same construction applies to polynomial-like maps $f:U' \to U$ with connected filled Julia set. We use this in the following.

\subsection{Renormalization of Newton maps}\label{Sec:RenormOfNewtonMaps}

Preliminaries aside, we now turn to the main goal of this section of giving a local combinatorial description of the critical and postcritical points of a Newton map that are not eventually fixed. The description relies on the fact that all periodic points can be described in terms of renormalization using \cite{DLSS}. Those results were made for the more general class of ``attracting-critically-finite" Newton maps, but for simplicity we specialize statements to postcritically finite Newton maps.  Another slight difference is that in the present work, polynomial-like maps are permitted to have degree one.

The Hubbard trees we use to model filled Julia sets will contain all fixed points of the first return of the polynomial-like map, so it is important for our classification that the polynomial-like maps are not iterates of polynomial-like maps (iterates have inequivalent Hubbard trees). A renormalization $\rho=(R^n, U, V)$ is said to be of \emph{lowest period} if $n$ is the minimal integer so that $R^n(K(\rho))=K(\rho)$, recalling that $K(\rho)$ denotes the filled Julia set of the renormalization. 

 Denote by $Q$ the set of critical and postcritical points of $N_p$ that are not eventually fixed. The next two propositions are Propositions 5.1 and 5.2 of \cite{DLSS}. 

\begin{proposition}[Lowest period renormalization]
\label{Prop:NonRepellingRenormalizableMinimalPeriod} Let $N_p$ be a postcritically finite Newton map. If $q\in Q$ is periodic then it is lowest period renormalizable.
\end{proposition}

The filled Julia set of the renormalization at periodic $q\in Q$ is denoted by $K(q)$. Furthermore, if $q\in Q$ is not in the filled Julia set of any renormalization from Proposition \ref{Prop:NonRepellingRenormalizableMinimalPeriod}, we define $K(q)$ to be the component of $N_p^{-i}(K(q'))$ where $i$ is minimal so that $N_p^{i}(K(q))=K(q')$ for some $q'$ in the filled Julia set of a renormalization.

\begin{proposition}[Separability of filled Julia sets]
\label{Prop:SeparabilityOfFibers} For all $q\in Q$, the set $K(q)$ does not intersect the Newton graph of any level. Furthermore, there is a level of the Newton graph so that for all $q,q'\in Q$, either $K(q)$ and $K(q')$ are in different complementary components of the Newton graph or $K(q)=K(q')$.
\end{proposition}

\begin{remark}[Hubbard trees from renormalizations]\label{Rmk:HubFromRenorm}
Each polynomial-like map constructed in Proposition \ref{Prop:NonRepellingRenormalizableMinimalPeriod} can be modeled using a Hubbard tree. Specifically, for $q\in Q$ with periodic $K(q)$, denote by $H(q)$ the extended Hubbard tree of the lowest period renormalization at $q$ whose cycle type is minimal so that all postcritical points of $N_p$ in $K(q)$ are vertices. For $q$ with $K(q)$ not periodic, let $i>0$ be minimal so that $N_p^{i}(q)$ lies in a periodic Hubbard tree $H_j$ (of the type just constructed); we define $H(q)$ to be the component of $N_p^{-i}(H_j)$ that contains $q$.
\end{remark}

\begin{corollary}[Separability of Hubbard trees] \label{Rem_ExtHubbardTrees}  For all $q\in Q$, the tree $H(q)$ does not intersect the Newton graph of any level. Furthermore, there is a level of the Newton graph so that any two trees $H(q)$ and $H(q')$ with $q,q'\in Q$ are either equal or lie in different complementary components of the Newton graph.
\end{corollary}

\begin{proof}
For each $q$, the containment $H(q)\subset K(q)$ holds. Proposition \ref{Prop:SeparabilityOfFibers} asserts that the separation property holds for the sets $K(q)$, and so it must hold for the trees $H(q)$. 
\end{proof}


\section{Newton rays from Newton maps} \label{Sec_Newtrays}
We now construct Newton ray edges, which will connect the repelling fixed points of the extended Hubbard trees constructed in the previous chapter to the Newton graph (see Theorems \ref{Lem_RepPtsAreLandingPts} and \ref{Lem_RepPtsFixedRays}).  There will be two types of Newton ray: one type is very simply a periodic ray in the immediate basin of a root, and the other type will be defined as subsets of ``bubble rays", chains of Fatou components that have been used in the literature in several situations \cite{YZ,Ro98,Lu}.

Let $\N$ be a level of the Newton graph $\Delta_\N$ so that every pole of $N_p$ and every critical point of $N_p$ that eventually lands in the channel diagram $\Delta$ is contained in $\Delta_{\N}$. The existence of such a level is guaranteed by \cite[Corollary 3.5]{MRS}. It is very possible that a higher level will be taken in the actual construction of the combinatorial invariant for $N_p$ (see Definition \ref{Def_TheNewtonGraphPCFMap}). Nevertheless, the results of this section hold in either case.

\begin{definition} \label{Def_NewtonRay}
A \emph{Newton ray} is a simple path $\mathcal{R}$ beginning at a vertex $v\in\Delta_\N$ and terminating at $z_0$ in the Julia set so that $\mathcal{R}\cap\Delta_\N=\{v\}$. We say that $\mathcal{R}$ \emph{lands} at $z_0$. The Newton ray $\mathcal{R}$ is said to be \emph{periodic} if there exists an integer $m \geq 1$ such that $N^m_p(\mathcal{R}) = \mathcal{R} \cup \mathcal{E}$, where $\mathcal{E} \subset \Delta_\N$ is a (possibly empty) subgraph in $\Delta_\N$ that depends on $\mathcal{R}$. The smallest such $m$ is \emph{the period} of $\mathcal{R}$.
\end{definition}

In the case that $z_0$ does not lie in the boundary of an immediate basin of a root, the desired periodic Newton ray landing at $z_0$ will be extracted from a bubble ray, which we now define. For example, see Figure \ref{Fig_NewtonDeg4BubbleRays} which displays period two Newton rays (indicated by yellow edges) together with a schematic of the associated bubble ray.

\begin{definition}
A \emph{bubble} of $N_p$ is a Fatou component in the basin of attraction of one of the fixed critical points of $N_p$.  The set of bubbles for $N_p$ is denoted $\mathfrak{B}(N_p)$.

\end{definition}

The \emph{center} of a bubble $B$ is the unique point of $B$ which eventually maps to a fixed critical point under $N_p$.  Two distinct bubbles with intersecting closures are said to be \emph{adjacent}. 

\begin{definition}[Bubble rays and chains]  A set of the form
\[
\overline{\bigcup_{i=n}^m B_i}
\]
where $B_0, B_1 , \ldots$ are distinct bubbles, $B_i$ is adjacent to $B_{i-1}$ for all $i \geq 1$, and $N_p(B_0)=B_0$ is called a
\begin{itemize}
\item \emph{bubble ray} if $n=0, m=\infty$,
\item \emph{finite bubble ray} if $n=0, m<\infty$,
\item \emph{finite bubble chain} if $n>0, m<\infty$.
\end{itemize}
\end{definition}

We now impose a tree structure on bubbles that is respected by $N_p$ (see Lemma \ref{Lem_BubblePredecessorPreserved}). Choose a maximal subtree $T_0\subset\Delta_0=\Delta$.  Inductively define $T_i\subset\Delta_i$ to be a maximal subtree of $N_p^{-1}(T_{i-1})\cap\Delta_{i}$ so that additionally $T_{i}\supset T_{i-1}$.  By construction, \[N_p(T_i)\subset T_{i-1}.\] Let $\mathfrak{B}_i$ be the set of bubbles that intersect $T_i$. Clearly \[\mathfrak{B}_0\subset\mathfrak{B}_1\subset\mathfrak{B}_2\subset\dots.\] It is a consequence of Corollary \ref{Cor:PrepolesInNewtonGraph} that 
\[\bigcup_{i>0}\mathfrak{B}_i=\mathfrak{B}(N_p).\]
\begin{definition}[Bubble parents] The \emph{parent function} $\mathscr{P}_i:\mathfrak{B}_i\to\mathfrak{B}_i$ is given by
\[\mathscr{P}_i(B)=
\begin{cases}
B, \text{  when } N_p(B)=B\\
B', \text{  otherwise}
\end{cases}
\]
where $B'$ is the unique bubble adjacent to $B$ intersecting the unbounded component of $T_i\setminus B$.
\end{definition}

\begin{remark}[Choice of bubble level]\label{Rmk:BubbleLevel}
It is easily seen that $\mathscr{P}_i=\mathscr{P}_j$ on $\mathfrak{B}_i\cap\mathfrak{B}_j$. Thus parent function values are not changed when $i$ is increased, so in practice we can often ignore the subscript. We always assume that $i$ is large enough so that $T_{i}$ contains all poles of $N_p$ and all critical points of $N_p$ that are eventually fixed. Consequently, all such poles and critical points are contained in the closure of some bubble in $\mathfrak{B}_{i}$.
\end{remark}

There is a finite subset of exceptional ``bad bubbles" in $\mathfrak{B}_{i}$ where the conclusion of Lemma \ref{Lem_BubblePredecessorPreserved} may not hold. Let $\mathfrak{B}^{bad}_0$ be the finite set of bubbles whose closures intersect a pole or an eventually-fixed critical point of $N_p$. Then 
\[\mathfrak{B}^{bad}:=\bigcup_{j\geq 0}\mathscr{P}_i^{ j}(\mathfrak{B}^{bad}_0) \] 
This definition is independent of $i$ due to Remark \ref{Rmk:BubbleLevel}.

\begin{lemma} \label{Lem_BubblePredecessorPreserved}
For all bubbles $B\in\mathfrak{B}_i\setminus\mathfrak{B}^{bad}$,
\[N_p(\mathscr{P}_i(B))=\mathscr{P}_i(N_p(B)).\]

\end{lemma}

\begin{proof}
There is an oriented simple path $\gamma$ in $T_i$ that begins at the center of $\mathscr{P}_i(B)$, then intersects the center of $B$, and then terminates at an end of $T_i$. Then because $\gamma$ does not pass through $\mathfrak{B}^{bad}$, it does not pass through any critical point of $N_p$. It follows that $N_p(\gamma)$ is a simple arc that begins at $N_p(\mathscr{P}_i(B))$, then intersects the center of $N_p(B)$, and then terminates at an end of $T_{i-1}$. Since $\gamma$ passed through no poles, $N_p(\gamma)$ does not pass through $\infty$. It follows that $N_p(\mathscr{P}_i(B))$ separates $N_p(B)$ from $\infty$ in $T_{i-1}$. But since $N_p(\mathscr{P}_i(B))$ is adjacent to $N_p(B)$ the equation follows.
\end{proof}

We will now associate a bubble ray to each bubble, and show that $N_p$ respects this assignment. This will be crucial in the proof of Theorem \ref{Lem_RepPtsAreLandingPts}. 

\begin{definition}[Bubble rays from bubbles] Let $B\in\mathfrak{B}_i$ be a bubble. The \emph{associated finite bubble ray} is given by
\[\widehat{B}=\overline{\bigcup_{k\geq 0}\mathscr{P}_i^k(B)}.\] The total number of bubbles in $\widehat{B}$ is denoted $|\widehat{B}|$.
\end{definition}

We need the following fact that $N_p$ maps the ray associated to a bubble to another such ray.

\begin{lemma}\label{lem:AssociatedRaysMaptoRays}
Let $B\in\mathfrak{B}_i$ be a bubble. Then 
\[N_p(\widehat{B})=\widehat{N_p(B)}\cup\mathcal{E}\]
where $\mathcal{E}$ is a finite set of bubbles that intersect $\Delta_\N$ and depend on $\widehat{B}$.
\end{lemma}
\begin{proof}
If $B$ intersects $\Delta_\N$, then each bubble in $\widehat{B}$ intersects $\Delta_\N$. By the forward invariance of $\Delta_\N$, it follows that each bubble in $\widehat{N_p(B)}$ intersects $\Delta_\N$, and the result is immediate.  

The other case is that $B$ does not intersect $\Delta_\N$. Since $\Delta_\N$ intersects each bubble in $\mathfrak{B}^{bad}$, it follows that $B\notin\mathfrak{B}^{bad}$. Let $j$ be the smallest number so that $\mathscr{P}^{j}(B)\cap \Delta_\N\neq\emptyset$, where of necessity $j\geq1$.  By Lemma \ref{Lem_BubblePredecessorPreserved},
\[N_p(\bigcup_{k=0}^{j-1}\mathscr{P}^k(B))=\bigcup_{k=0}^{j-1}\mathscr{P}^k(N_p(B))\subset\widehat{N_p(B)}\]
Then by the forward invariance of $\Delta_\N$ under $N_p$, we have that
\[\mathcal{E}:=N_p\big(\bigcup_{k>j-1}\mathscr{P}^k(B)\big)\]
is a collection of bubbles that intersect $\Delta_\N$.
\end{proof}

\begin{theorem}[Newton rays for repelling cycles] \label{Lem_RepPtsAreLandingPts}
Let $\omega$ be a repelling periodic point of period $m > 1$ of $N_p$. Then there exists a periodic Newton ray $\mathcal{R}$ landing at $\omega$ whose period is divisible by $m$.
\end{theorem}

\begin{proof} 
Suppose first that $\omega$ is in the boundary of an immediate basin of a root. Since $N_p$ is postcritically finite, its Julia set is locally connected. Thus the boundary of the immediate basin is locally connected and so there is a periodic internal ray $\mathcal{R}$ that connects $\omega$ to the root. Evidently $\mathcal{R}$ has period divisible by $m$.

The rest of the proof concerns the case when $\omega$ is not in the boundary of an immediate basin of a root. Let $Y$ be a neighborhood of $\omega$ that is a disk in linearizing coordinates. By passing to a smaller linearizing neighborhood, it may be assumed that $\infty\notin Y$.   Let $h:Y\to\Cc$ be the branch of $N_p^{-m}$ fixing $\omega$, from which it follows that $h(Y)\subset Y$.

We first show the existence of a bubble in $Y$ that does not intersect $\Delta_\N$. It is known from Proposition \ref{prop:Jconnected} that the Julia set is connected. Since both $\infty$ and $\omega$ are in the Julia set, there is some point $z_0$ in the interior of $Y$ that is also in the Julia set. Let $U_1$ be a non-fixed preimage of a fixed Fatou component.  Let $z_1\in\partial U_1$ be a pole, which is evidently in the Julia set. Recall the standard fact that the preimages of $z_1$ are dense in the Julia set. Thus there is a sequence $\{z_k\}$ of distinct points in the Julia set so that $z_k\to z_0$ and $N_p (z_k)=z_{k-1}$   for $k\geq 2$. Note that each $z_k$  is in the boundary of some Fatou component $U_k$  that satisfies $N_p^{k-1} (U_k)=U_1$ where all the Fatou components $U_k$ are distinct. Each $U_k$ is evidently a bubble. 

For each $\epsilon >0$ there are only finitely many Fatou components with spherical diameter larger then $\epsilon$ \cite[Lemma 19.4 ff.]{MilnorBook}. Thus $diam(U_k )\to 0$ as $k\to\infty$. It follows from the triangle inequality that $diam(U_k\cup \{z_0\} )\to 0$. Thus there is some choice of $k$ so that $U_k\subset Y$. Since such a $U_k$ can be chosen with arbitrarily small diameter, we may assume that it is not one of the finitely many bubbles that intersect $\Delta_\N$. 

We have thus proven the existence of a bubble $B_0$ in $Y$ that does not intersect $\Delta_\N$. Let $B_i:=h^{i}(B_0)$. We first give a linear upper bound for the total number of bubbles in the bubble ray $\widehat{B}_i$. Let $M$ be the number of bubbles in:
\[\bigcup_{k\geq 0}\mathscr{P}^{k}(\bigcup_{1\leq j \leq d} N_p^{-m}(A_j))\]
where the reader is reminded that $A_1,...,A_d$ are the bubbles fixed by $N_p$. It follows from Lemma \ref{lem:AssociatedRaysMaptoRays} that $N_p^m(\widehat{B}_i)=\widehat{B}_{i-1}\cup\mathcal{E}$ where $\mathcal{E}$ is a finite union of bubbles that intersect $\Delta_\N$.
Then for all $i$,
\[|\widehat{B}_i|-|\widehat{B}_{i-1}|<M\]
so by induction,
\[|\widehat{B}_i|<|\widehat{B}_{0}|+M\cdot i\] from which it follows that
\begin{equation}\label{equation:bubbleUpperBound}
\frac{1}{|\widehat{B}_i|}>\frac{1}{|\widehat{B}_{0}|+M\cdot i}.
\end{equation}

Now let $\widehat{B}_i^Y$ be the largest bubble chain in $\widehat{B}_i$ that contains $B_i$ and consists only of bubbles that are a subset of $Y$.

\begin{lemma}\label{lemma:BubbleRaysOverlap}
There exist $i,j\geq 0$ so that $i>j$ and $\widehat{B}_i^Y\cap\widehat{B}_j^Y$ contains at least one bubble.
\end{lemma}
\begin{proof}[Proof of Lemma \ref{lemma:BubbleRaysOverlap}]
Suppose the contrary, namely that if $\widehat{B}_i^Y\cap\widehat{B}_j^Y$ contains a bubble then $i=j$. For each $i$, let $\textrm{Max}(\widehat{B}_i^Y)$ be a bubble in the bubble chain $\widehat{B}_i^Y$ having maximal spherical diameter, and let $\delta_i:=diam(\textrm{Max}(\widehat{B}_i^Y))$. By hypothesis, $\textrm{Max}(\widehat{B}_i^Y)=\textrm{Max}(\widehat{B}_j^Y)$ implies that $i=j$. It is a consequence of subhyperbolicity that there are only finitely many Fatou components with diameter in the spherical metric greater than an arbitrarily fixed constant \cite[Lemma 19.4 ff.]{MilnorBook}. Thus $\delta_i\to 0$.  

Since $\omega$ is periodic and assumed to not be in the boundary of any immediate basin of a root, it is clear that $\omega$ is not in the boundary of any bubble. Let $\mathcal{U}_Y$ be the union of all bubbles that intersect $\partial Y$. Then $\mathcal{F}_Y=\overline{Y}\setminus\mathcal{U}_Y$ is a closed set, and by the finiteness result mentioned above, the spherical distance $d(\omega,\partial \mathcal{F}_Y)$ is positive. Since $B_i\to \omega$, it follows that $d(\overline{B}_i,\partial \mathcal{F}_Y)\to d(\omega, \partial \mathcal{F}_Y)>0$.  The closure of $\widehat{B}_i^Y$ intersects $\partial \mathcal{F}_Y$ by definition, and since $\delta_i\to 0$, the number of bubbles in $\widehat{B}_i^Y$ must tend to $\infty$. 

Let $A$ be the fundamental annulus bounded by $\partial Y$ and $h(\partial Y)$. By a similar argument there exists $K\in\mathbb{Z}^+$ so that the number of bubbles in $\widehat{B}_i^Y\cap A$ is greater than $2M$ for all $i>K$. Consequently,
\begin{equation}\label{equation:bubbleLowerBound}
|\widehat{B}_i^Y|\geq 2M(i-K)
\end{equation}
for each $i>K$. Then
\[\lim_{i\to\infty}\frac{|\widehat{B}_i^Y|}{|\widehat{B}_i|} {\geq} \lim_{i\to\infty}\frac{|\widehat{B}_i^Y|}{|\widehat{B}_0|+M\cdot i} {\geq} \lim_{i\to\infty}\frac{2M(i-K)}{|\widehat{B}_0|+M\cdot i}=2\] where the first inequality follows from multiplying both sides of Inequality (\ref{equation:bubbleUpperBound}) by $|\widehat{B}_i^Y|$ and passing to the limit, and the second follows from Inequality (\ref{equation:bubbleLowerBound}) in a similar way.
This contradicts the fact that $\frac{|\widehat{B}_i^Y|}{|\widehat{B}_i|}\leq 1$ 
\end{proof}

The proof of Theorem \ref{Lem_RepPtsAreLandingPts} is now resumed. Let $R_0\subset\widehat{B}_i^Y\cap\widehat{B}_j^Y$ be the bubble guaranteed by Lemma \ref{lemma:BubbleRaysOverlap}. Then the following is a finite bubble chain connecting $B_j$ and $R_0$,
\[(\widehat{B_j}\setminus\widehat{R_0})\cup R_0\]
and so its image under $h^{i-j}$ is a finite bubble chain connecting $B_i=h^{i-j}(B_j)$ to $h^{i-j}(R_0)$. By Lemma \ref{Lem_BubblePredecessorPreserved}, it follows that $R_1:=h^{i-j}(R_0)$ is a bubble in $\widehat{B}_i$. 

Let $\lambda$ be a simple path in $\widehat{R}_1$ connecting the center of $R_0$ to the center of $R_1$, where $\lambda$ is further assumed to be a path in the tree used to define the parent function. The set
\[\lambda' = \bigcup_{k=1}^{\infty}h^{k(j-i)}(\lambda)\cup\{\omega\}\]
forms an arc that evidently satisfies $N_p^{m(j-i)}(\lambda')\supset\lambda'$. It is simple because of Lemma \ref{lem:AssociatedRaysMaptoRays}, but it does not necessarily intersect the Newton graph.
Thus we let $\ell\geq 0$ be the smallest multiple of $j-i$ so that $N_p^{m\cdot\ell}(\lambda')$ intersects $\Delta_\N$, and define the ray
\[\mathcal{R}=\overline{N_p^{m\cdot \ell}(\lambda')\setminus\Delta_\N}.\]

We now show that $\mathcal{R}$ is the desired periodic Newton ray. Since $\lambda'$ was a path in the tree used to define the parent function, Lemma \ref{Lem_BubblePredecessorPreserved} implies that $\mathcal{R}$ is also a path in the tree and hence simple.
By construction, $\mathcal{R}$ lands at $\omega$ and begins at a single vertex of $\Delta_\N$. The periodicity of $\mathcal{R}$ follows from Lemma \ref{lem:AssociatedRaysMaptoRays}.
\end{proof}

For the rest of the section, let $H$ be a Hubbard tree of period $m$ as constructed in Remark \ref{Rmk:HubFromRenorm}, and let $\omega\in H$ satisfy $N_p^{m}(\omega)=\omega$.

\begin{remark} \label{Rem_RaysDisjointFromHubbardTrees}
For any Newton ray $\mathcal{R}$ landing at $\omega$ and for any other Hubbard tree $H'$,
\[
(\mathcal{R}\setminus \{\omega\}) \cap H' = \emptyset.
\]
In the case when $\omega$ lies in the boundary of the immediate basin of a root, this is because $\mathcal{R}\setminus \{\omega\}$ is a subset of the immediate basin of a root, and such immediate basins do not intersect any of the Hubbard trees by Proposition \ref{Prop:SeparabilityOfFibers}. Otherwise, suppose $\omega$ does not lie in the boundary of the immediate basin of a root. By construction any point $x \in \mathcal{R}\setminus \{\omega\}$ is eventually mapped into $\Delta_\N$ by $N_p$, while the orbit of $H'$ under $N_p$ is disjoint from $\Delta_\N$.
\end{remark}

We now produce a ``rightmost" Newton ray in order to prove Theorem \ref{Lem_RepPtsFixedRays} which asserts the existence of a periodic Newton ray of minimal possible period. We will not need to order rays that land at degenerate Hubbard trees, so we assume that $H$ is not degenerate.  Arbitrarily choose an edge in $H$ with $\omega$ as an endpoint.  Denote this edge by $E_\omega$.

Fix the orientation of $\S^2$ to be the counterclockwise orientation for the rest of this paper. 
\begin{definition}[Newton ray order] \label{Def_RightEnvelope}
Let $\mathcal{R}', \, \mathcal{R}''$ be Newton rays landing at $\omega$ and let $E_w$ be an edge in $H$ with endpoint $\omega$.  The Newton rays are said to \emph{not cross-intersect} if they satisfy the following property: if $l$ is a curve disjoint from $\mathcal{R}',\, \mathcal{R}''$ and connecting the endpoints of $\mathcal{R}''$ and $E_w$ different from $\omega$, then $\mathcal{R}'$ intersects only one complementary component of $\Cc \setminus \left(E_{\omega} \cup l \cup \mathcal{R}'' \right)$.  Assume that $\mathcal{R}'$ and $\mathcal{R}''$ don't cross-intersect.  Let $Y$ be a neighborhood of $\omega$ such that for some branch $h=N^{-m}_p$,  we have $h(Y) \subset Y$.  We say that $\mathcal{R}' \succeq \mathcal{R}''$ if for any such neighborhood $Y$, the cyclic order around $\omega$ in $Y$ is $\mathcal{R}', \,\mathcal{R}'', \, E_w$.
\end{definition}

\begin{remark} Note that for any other such neighborhood $Y' \subset Y$, the cyclic order of $\mathcal{R}'', \,\mathcal{R}', \, E_w$ in $Y'$ is the same as in $Y$.  Hence the relation $\succeq$ is well defined and doesn't depend on the choice of the neighborhood $Y$.
\end{remark}

\begin{lemma}
Let $\mathcal{R}_1, \, \mathcal{R}_2$ be periodic (possibly cross-intersecting) Newton rays that land at a repelling fixed point  $\omega$ of $N_p^m$ in a period $m$ Hubbard tree $H$.  Then there is a periodic Newton ray $\mathcal{R} = RE(\mathcal{R}_1,\mathcal{R}_2)$ that satisfies the following properties:
\begin{itemize}
\item $\mathcal{R}$ ends at $\omega$.
\item $\mathcal{R}$ doesn't cross-intersect either $\mathcal{R}_1$ or $\mathcal{R}_2$.
\item $\mathcal{R} \succeq \mathcal{R}_1, \, \mathcal{R} \succeq \mathcal{R}_2$.
\end{itemize}
\end{lemma}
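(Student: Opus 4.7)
The plan is to split into two cases based on whether $\mathcal{R}_1$ and $\mathcal{R}_2$ cross-intersect. If they do not cross-intersect, the relation $\succeq$ is already defined between them, and we take $\mathcal{R}$ to be whichever of $\mathcal{R}_1,\mathcal{R}_2$ dominates the other; the three required properties then follow immediately. The substantive case is when $\mathcal{R}_1$ and $\mathcal{R}_2$ cross-intersect.

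Let $m\geq 1$ with $N_p^m(\omega)=\omega$, and fix a neighborhood $Y$ of $\omega$ on which a branch $h$ of $N_p^{-m}$ is defined with $h(\omega)=\omega$ and $h(Y)\subset Y$. Shrinking $Y$, we may assume $\omega$ is its only postcritical point and that $Y\cap(E_\omega\cup\mathcal{R}_1\cup\mathcal{R}_2)$, near $\omega$, consists of three distinct Jordan germs. These germs have a well-defined cyclic order; let $W\subset Y\setminus(E_\omega\cup\mathcal{R}_1\cup\mathcal{R}_2)$ be the component whose closure contains $\omega$ and whose germ at $\omega$ is bounded by $E_\omega$ and the counterclockwise cyclic successor of $E_\omega$. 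Any Newton ray landing at $\omega$ with germ in $\overline{W}$ will have cyclic order $\mathcal{R},\mathcal{R}_a,\mathcal{R}_b,E_\omega$ for some labeling $\{a,b\}=\{1,2\}$, which is exactly the condition $\mathcal{R}\succeq\mathcal{R}_i$ for $i=1,2$.

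To build such a ray we mimic the proof of Lemma \ref{Lem_RepPtsAreLandingPts} while remaining in $\overline{W}$. Passing to $h^M$ for $M$ large enough that $\arg((N_p^m)'(\omega))^M$ is close to $0\pmod{2\pi}$ makes a small sub-sector of $W$ approximately $h^M$-invariant; pick $B_0\subset W$ very close to $\omega$ and set $B_n=h^{Mn}(B_0)\subset W$. To each $B_n$ we attach its predecessor chain $\widehat{B_n}$, modifying the auxiliary spanning trees $T_i$ preceding Lemma \ref{Lem_BubblePredecessorPreserved} so that inside $Y$ the predecessor of a bubble always lies in $\overline{W}$ whenever such a choice exists; this option is available because $\overline{W}$ contains bubbles of arbitrarily high generation, the point $\omega\in\partial W$ being in the Julia set. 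The generation-counting and subhyperbolicity argument of Lemma \ref{Lem_RepPtsAreLandingPts} then yields indices $i>j$ with a common bubble in $\widehat{B_i}\cap\widehat{B_j}\cap\overline{W}\cap Y$, producing a periodic bubble ray $\mathcal{B}\subset\overline{W}$ landing at $\omega$; let $\mathcal{R}$ be the associated Newton ray via Definition \ref{Def_NewtonRay}.

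For non-cross-intersection with $\mathcal{R}_i$, route the witness curve $l$ of Definition \ref{Def_RightEnvelope} inside $\overline{W}$ outside $Y$, exploiting the sector separation of germs at $\omega$; the ray $\mathcal{R}$ then lies in the complementary component of $\Cc\setminus(E_\omega\cup l\cup\mathcal{R}_i)$ whose closure contains the germ of $W$ at $\omega$. Combined with the cyclic-order statement this gives $\mathcal{R}\succeq\mathcal{R}_i$ and non-cross-intersection. The main obstacle lies in the rotation step, namely justifying the existence of a nontrivial sub-sector of $W$ preserved by an iterate $h^M$; in the generic case (when $\arg((N_p^m)'(\omega))/2\pi$ is irrational) this relies on an equidistribution argument, and one must additionally verify that the locally modified predecessor trees $T_i$ retain the uniform generation-gap estimates driving the counting argument of Lemma \ref{Lem_RepPtsAreLandingPts}.
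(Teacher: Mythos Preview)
Your approach diverges substantially from the paper's, and the route you take has a genuine gap.

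The paper's construction is far more direct: it builds $\mathcal{R}$ \emph{out of pieces of $\mathcal{R}_1$ and $\mathcal{R}_2$ themselves}. Concretely, one takes the component $Y_1$ of $Y\setminus(\mathcal{R}_1\cup\mathcal{R}_2\cup E_\omega)$ adjacent to $E_\omega$ on the correct side and sets $RE_Y(\mathcal{R}_1,\mathcal{R}_2):=\partial Y_1\setminus(E_\omega\cup\partial Y)$. This boundary arc is automatically contained in $(\mathcal{R}_1\cup\mathcal{R}_2)\cap Y$, hence is already a concatenation of internal-ray segments in bubbles; pushing it forward by $N_p^{im}$ until it meets $\Delta_N$ yields a bona fide Newton ray. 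Non-cross-intersection and the order relation $\mathcal{R}\succeq\mathcal{R}_i$ are immediate from the fact that $\mathcal{R}$ is the boundary of a single complementary component. No new bubble chains, no predecessor trees, no linearizer argument are needed.

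By contrast, you attempt to manufacture an entirely new periodic bubble ray confined to a sector $W$. The step you flag as an ``obstacle'' is in fact fatal as written: when $\arg\big((N_p^m)'(\omega)\big)/2\pi$ is irrational, no iterate $h^M$ has derivative with argument exactly zero, so a sub-sector of $W$ is only \emph{approximately} $h^M$-invariant. Under iteration the total rotation $n\cdot\arg\big((h^M)'(\omega)\big)$ grows without bound, and the bubbles $B_n=h^{Mn}(B_0)$ will eventually drift out of $W$; equidistribution does not help here. Your proposed local modification of the spanning trees $T_i$ is also problematic: the predecessor map is a global combinatorial object, and forcing predecessors inside $Y$ to lie in $\overline{W}$ generally destroys the compatibility $N_p(\mathscr{P}(B))=\mathscr{P}(N_p(B))$ that the generation-counting argument relies on. Finally, even if a ray were produced with germ in $\overline{W}$, your argument for non-cross-intersection routes the witness curve $l$ ``inside $\overline{W}$ outside $Y$'', but $l$ must join a point on $\Delta_N$ to the far endpoint of $E_\omega$, which is a global condition not controlled by the local sector.
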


\begin{remark}
Such a ray $RE(\mathcal{R}_1,\mathcal{R}_2)$ is said to be \emph{the right envelope} of Newton rays $\mathcal{R}_1, \mathcal{R}_2$ (see Figure \ref{Fig_RightEnvelope}).
\end{remark}

\begin{proof}
If $\omega$ lies in the boundary of an immediate basin of a root, there is the possibility that one of $\mathcal{R}_1$ and $\mathcal{R}_2$ is an internal ray, which we take to be $\mathcal{R}_1$ without loss of generality. If $\mathcal{R}_1=\mathcal{R}_2$, we take take $\mathcal{R}:=\mathcal{R}_1$. Otherwise $\mathcal{R}_1$ and $\mathcal{R}_2$ intersect only at their endpoints, so if $\mathcal{R}_1 \succeq \mathcal{R}_2$ then $\mathcal{R}:=\mathcal{R}_1$ or if $\mathcal{R}_2 \succeq \mathcal{R}_1$ then $\mathcal{R}:=\mathcal{R}_2$. For the rest of the proof we assume that neither of $\mathcal{R}_1$ and $\mathcal{R}_2$ is an internal ray.

Let $Y$ be a neighborhood of $\omega$ such that for some branch $h=N^{-m}_p, \, h(Y) \subset Y, \, E_w\cap Y \subsetneqq E_w$ and 
$\partial{Y} \cap \mathcal{R}_1 =\{v_1\}, \quad \partial{Y} \cap \mathcal{R}_2 =\{v_2\},$
where $v_1, \, v_2$ are iterated preimages of vertices of $\Delta_\N$ under $N_p$. Let $Y_1,Y_2$ be the connected components of 
$Y \setminus \left(\mathcal{R}_1 \cup \mathcal{R}_2 \cup E_w \right)$
so that 
$E_w\cap Y \subset \partial{Y_1} \cap \partial{Y_2}$ and the cyclic order around $\omega$ is $Y_2, \,E_w, \, Y_1$. We define what will be called the right envelope of $\mathcal{R}_1$ and $\mathcal{R}_2$ in $Y$ by
\[
RE_Y(\mathcal{R}_1,\mathcal{R}_2) := \partial{Y_1} \setminus \left(E_w \cup \partial{Y}\right). 
\]
It follows from the construction that 
\[
RE_Y(\mathcal{R}_1,\mathcal{R}_2) \subset (\mathcal{R}_1 \cup \mathcal{R}_2)\cap Y.
\]
Let $i$ be the smallest integer $i>0$ such that $N^{im}_p(RE_Y(\mathcal{R}_1,\mathcal{R}_2)) \cap \Delta_\N \neq \emptyset$. 
Define
\[
\mathcal{R} = RE(\mathcal{R}_1,\mathcal{R}_2) := \overline{N^{im}_p(RE_Y(\mathcal{R}_1,\mathcal{R}_2))\setminus\Delta_\N}.
\]
It is evident that $\mathcal{R}$ is a Newton ray that lands at $\omega$ and satisfies the desired properties.

\end{proof}

\begin{figure}[hbt]
\begin{center}
\setlength{\unitlength}{1cm}
\begin{picture}(11,7)
\includegraphics[width=12cm]{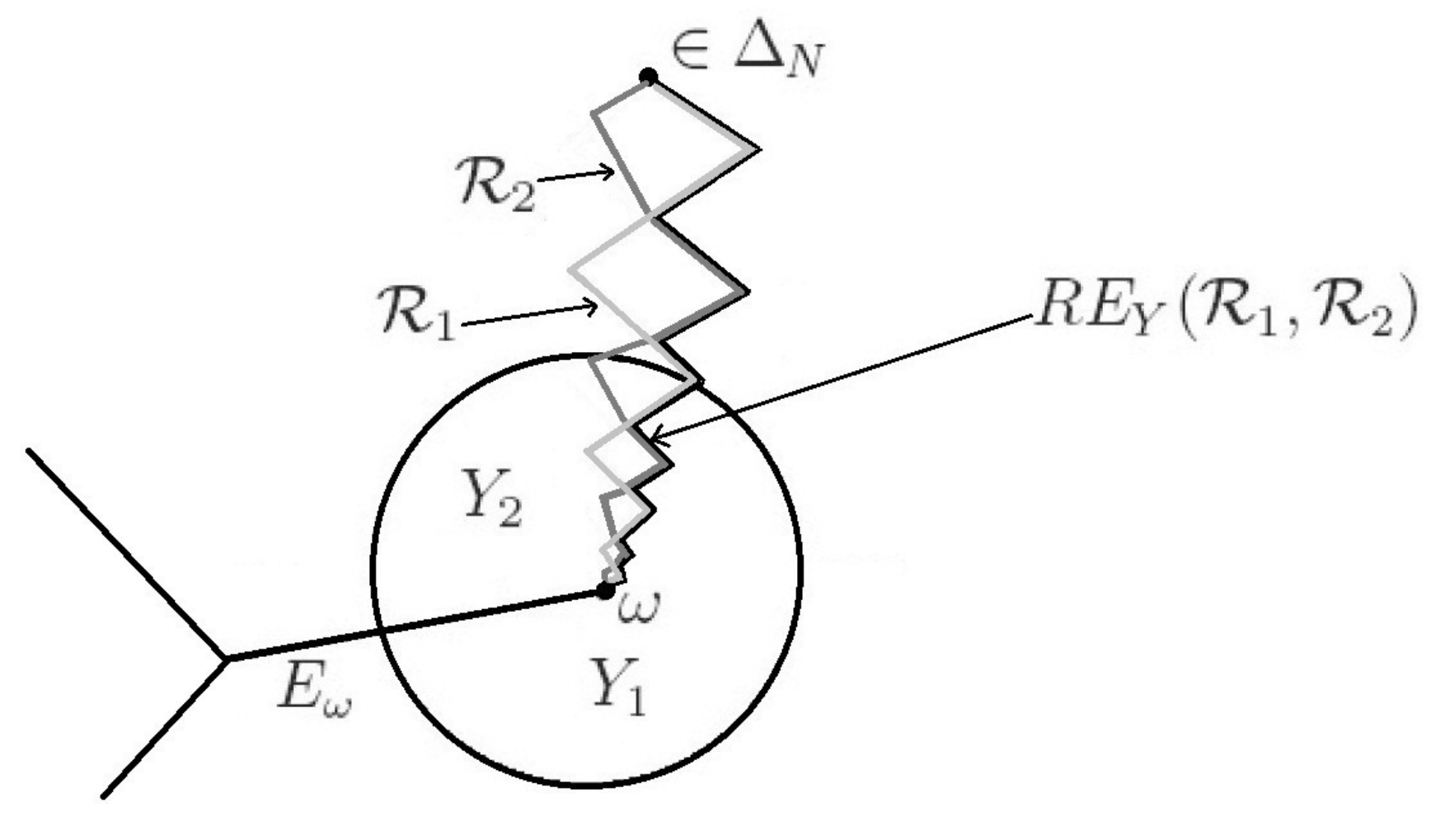}
\end{picture}
\caption{\label{Fig_RightEnvelope}  Two rays $\mathcal{R}_1$ and $\mathcal{R}_2$ that cross intersect and land at the point $\omega$ are indicated by light and medium gray.  The disk represents the set $Y$, and the right envelope $RE_Y(\mathcal{R}_1,\mathcal{R}_2)$ is indicated by the jagged black line.}
\end{center}
\end{figure}

\begin{remark}
Note that the construction of the Newton ray $RE(\mathcal{R}_1,\mathcal{R}_2)$ doesn't depend on the choice of $Y$. The right envelope $RE(\mathcal{R}_1,\mathcal{R}_2,\ldots,\mathcal{R}_n)$ of finitely many Newton rays $\mathcal{R}_1,\mathcal{R}_2,\ldots,\mathcal{R}_n$ is defined analogously.
\end{remark}

\begin{theorem} \label{Lem_RepPtsFixedRays} Let $N_p$ be a postcritically finite Newton map, and let $H$ be a Hubbard tree of period $m>1$ as constructed in Remark \ref{Rmk:HubFromRenorm}. For any repelling fixed point $\omega$ of $N_p^m$ in $H$, there exists a Newton ray of period $m\cdot \ell$ that lands at $\omega$, where $\ell$ is the period of the external rays landing at $\omega$ after straightening as in Theorem \ref{thm:straightening}. 
\end{theorem}
\begin{proof}
It follows from Theorem \ref{Lem_RepPtsAreLandingPts} that there exists a positive integer $r$ and a Newton ray $\mathcal{R}_{1}$ of period $mr$ that lands at $\omega$. Let $\mathcal{R}_{i} = N^{(i-1)\cdot m}_p(\mathcal{R}_{1})$ for $1\leq i \leq r$ and recall that the right envelope
\[
\mathcal{R} = RE(\mathcal{R}_{1},\mathcal{R}_{2},\ldots,\mathcal{R}_{r})
\]
 is a periodic Newton ray that lands at $\omega$. Denote by $Y$ the neighborhood of $\omega$ such that for some branch $h=N^{-m}_p, \, h(Y) \subset Y$ and let $Y_1$ be the connected component of 
\[
Y\setminus \bigcup_{i=1}^r \mathcal{R}_{i}
\]
such that 
\[
\mathcal{R} \cap \partial{Y_1} \neq \emptyset \quad \mbox{and} \quad E_\omega \cap \partial{Y_1} \neq \emptyset.
\]
Since the map $N_p$ is orientation preserving, 
\[
N^{m\cdot\ell}_p(Y_1) \cap Y_1 = Y_1,
\]
and because the $\mathcal{R}_i$ form a cycle, $N^{m\cdot\ell}_p(\mathcal{R}) = \mathcal{R} \cup \mathcal{E}$, where $\mathcal{E}$ is a union of edges of $\Delta_\N$. Therefore $RE(\mathcal{R}_{1},\mathcal{R}_{2},\ldots,\mathcal{R}_{r})$ is a Newton ray of period $m\cdot\ell$.
\end{proof}


\section{Extended Newton graphs from Newton maps} \label{Sec_ConstExtNewtGraph} 
\subsection{Construction}\label{Subsec_ConstructionExtNewtGraph}
In Section \ref{Sec_NewtonGraph}, the Newton graph of arbitrary level was constructed for a given postcritically finite Newton map $N_p$ (see Definition \ref{Def_ConcreteNewtonGraph}).    Next, extended Hubbard trees were constructed in Section \ref{Sec_RenormalizationNewton} to give a combinatorial description of the periodic free postcritical points.  Finally, periodic Newton rays were constructed in Section \ref{Sec_Newtrays} to connect the Hubbard trees to the Newton graph.

Here we specify the level of the Newton graph that will be used in the construction of the extended Newton graph. It is a consequence of Corollary \ref{Cor:PrepolesInNewtonGraph} that above a certain level, the Newton graph contains all critical points that are eventually fixed.

\begin{definition}[Newton graph of a postcritically finite Newton map] \label{Def_TheNewtonGraphPCFMap}
For a postcritically finite Newton map $N_p$, let $\N$ be the minimal integer such that 
\begin{itemize}
\item every pole is contained in $\Delta_\N$,
\item every critical point that eventually lands on the channel diagram $\Delta$ is contained in $\Delta_{\N}$, and
\item all periodic Hubbard trees and preperiodic trees are separated by $\Delta_\N$ as in Corollary \ref{Rem_ExtHubbardTrees}.

\end{itemize}The graph $\Delta_\N$ is called \emph{the Newton graph} of $N_p$.  
\end{definition}

    The proof of the following theorem uses these objects to construct a connected finite forward-invariant graph $\Delta^*_\N$ containing the postcritical set.  This graph will then be defined to be the extended Newton graph of $N_p$.

\begin{theorem}
\label{thm:ExtendNewtGraph} For a given postcritically finite Newton map $N_p$, let $\Delta_\N$ be the Newton graph of $N_p$. There exists a finite connected graph
$\Delta^*_\N$ that contains $\Delta_\N$, is invariant under $N_p$ and contains the critical and postcritical set of $N_p$. Furthermore, every edge of $\Delta^*_\N$ is eventually mapped by $N_p$ either into $\Delta_\N$, into an extended Hubbard tree, or onto a periodic Newton ray union edges from $\Delta_\N$. 
\end{theorem}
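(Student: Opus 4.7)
The plan is to assemble $\Delta^*_N$ from four forward-invariant pieces and then verify the listed properties. First I would take the Newton graph $\Delta_N$ of Definition~\ref{Def_TheNewtonGraphPCFMap} as the base; it is forward invariant by construction and, by the second bullet of that definition, contains every critical point whose orbit meets the channel diagram $\Delta$ together with that entire orbit up to $\Delta$. Second, I would adjoin the finite disjoint union of extended Hubbard trees $H(U_k)$, $1\le k\le M$, produced in Section~\ref{Sec:RenormOfNewtonMaps}; by Proposition~\ref{Rem_ExtHubbardTrees} these are pairwise disjoint, disjoint from $\Delta_N$, and by the first bullet of Definition~\ref{Def_TheNewtonGraphPCFMap} each lies in a distinct complementary component of $\Delta_N$. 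The cycle type of each $H(U_k)$ is chosen large enough to contain every periodic postcritical point in that component as a vertex, and the $N_p$-orbit of $H(U_k)$ cycles through the trees associated to the same orbit of periodic postcritical points, so the union of all $H(U_k)$ is forward invariant as a set.

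Third, to connect the Hubbard trees to the Newton graph, for each cycle of trees $H(U_k)$ corresponding to a single cycle of periodic postcritical points of period $\geq 2$, I would choose one non-degenerate tree $H(U_{k_0})$ in the cycle and one repelling fixed point $\omega$ of the polynomial-like map $\widehat{F}_{k_0}$ on it. Lemma~\ref{Lem_RepPtsFixedRays} then supplies a periodic Newton ray $\mathcal{R}$ of period $m(k_0)\cdot\ell$ landing at $\omega$. Its forward orbit $\{N_p^i(\mathcal{R})\}_{i\ge 0}$ is finite, intersects $\Delta_N$ in exactly one point per ray by Definition~\ref{Def_NewtonRay}, and contains a ray landing on every Hubbard tree in the cycle; moreover $N_p^{m(k_0)\ell}(\mathcal{R})=\mathcal{R}\cup\mathcal{E}$ for a finite union of edges $\mathcal{E}\subset\Delta_N$, so the union of $\Delta_N$, the $H(U_k)$'s, and these ray orbits is forward invariant. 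For cycles whose Hubbard trees are degenerate single points I would apply Lemma~\ref{Lem_RepPtsAreLandingPts} with that point as $\omega$.

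Fourth, to capture the remaining free critical points and their preperiodic postcritical orbits, I would adjoin iterated preimages of Pieces~2 and~3. For each free critical point $c$ not yet in the graph, let $j(c)$ be the minimal nonnegative integer with $N_p^{j(c)}(c)\in H(U_k)$ for some $k$; then $c$ lies in a connected component $T_c$ of $N_p^{-j(c)}(H(U_k))$. I would add $T_c$ together with the appropriate $N_p^{-j(c)}$-preimage of a connecting ray from the third step. Since $P_{N_p}$ is finite, only finitely many such pieces are needed. Containment of $C_{N_p}\cup P_{N_p}$ then follows piece by piece, and the edge-type dichotomy of the theorem is immediate from the piece an edge belongs to.

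The hard part will be arguing connectedness throughout the assembly. The rays of the third step attach each Hubbard tree cycle to $\Delta_N$ through their unique intersection points with $\Delta_N$, so $\Delta_N$ together with Pieces~2 and~3 is connected. For Piece~4, a preimage of a Newton ray under $N_p^{-j(c)}$ need not terminate on $\Delta_N$ itself but only on some component of $N_p^{-j(c)}(\Delta_N)$, which by Corollary~\ref{Cor:NewtonGraphPreimagesConnected} (applicable once $j(c)$ is at least the minimal level containing all poles) equals $\Delta_{N+j(c)}$; so I would simultaneously adjoin a finite path in this higher-level Newton graph from the landing point of the preimage ray back to $\Delta_N$. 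This preserves the edge-type dichotomy, because every adjoined edge lies in some $\Delta_{N+j(c)}$ and thus eventually maps into $\Delta_N$ under iteration of $N_p$.
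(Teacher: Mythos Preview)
Your overall architecture coincides with the paper's: build $\Delta^*_N$ from the Newton graph, the cycle of extended Hubbard trees, a periodic Newton-ray orbit linking each tree cycle to $\Delta_N$, and then preperiodic preimage pieces to catch the remaining free (post)critical points. The third step, including the use of Lemmas~\ref{Lem_RepPtsAreLandingPts} and~\ref{Lem_RepPtsFixedRays}, is handled just as in the paper.

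Your fourth step has a gap, however. You adjoin only the single deepest preimage tree $T_c$ (and one preimage ray) for each free critical point $c$, but this omits the intermediate components $N_p(T_c),\,N_p^2(T_c),\,\ldots,\,N_p^{j(c)-1}(T_c)$, each of which contains a postcritical point $N_p^i(c)$ that must lie in $\Delta^*_N$. Consequently your graph neither contains $P_{N_p}$ nor is forward invariant: $N_p(T_c)$ is the component of $N_p^{-(j(c)-1)}(H(U_k))$ through $N_p(c)$, and there is no reason it coincides with any $T_{c'}$ you have already added. The paper avoids this by pulling back one step at a time: for each $0\le i<r$ it sets $H^i(U_r)$ to be the appropriate component through $N_p^i(w_j)$ and builds a ray $\tilde\gamma^i(U_r)$ as a single $N_p$-preimage of $\tilde\gamma^{i+1}(U_r)$, extended if necessary by a short path in $\Delta_{2N}\setminus\Delta_N$ so as to terminate on $\Delta_N$. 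The inductive relation $N_p(\tilde\gamma^i)\subset\tilde\gamma^{i+1}\cup\Delta_N$ then yields forward invariance directly. A related secondary issue: your appeal to Corollary~\ref{Cor:NewtonGraphPreimagesConnected} has the hypothesis reversed (the condition constrains $k-m$, not $m$), so the all-at-once $N_p^{-j(c)}$ pullback is precisely the regime where the stated corollary need not apply; the step-by-step approach avoids this as well, since after one preimage the ray endpoint lies in $N_p^{-1}(\Delta_N)\subset\Delta_{N+1}\subset\Delta_{2N}$.
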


\begin{proof} 
The Newton graph $\Delta_\N$ captures the behavior of postcritical points of $N_p$ which eventually map into the channel diagram $\Delta$.  We now deal with the critical and postcritical points of $N_p$ which are not eventually fixed (recall that the set of such points is denoted $Q$). The graph will be constructed by pulling back invariant graphs, from which forward invariance under $N_p$ easily follows.

\textbf{Periodic Hubbard trees:} 
Let $H(q)$ be a periodic Hubbard tree as in Remark \ref{Rmk:HubFromRenorm} with lowest period $m$ (the dependence on $q$ is largely suppressed for the next two paragraphs). By Theorem \ref{Lem_RepPtsFixedRays}, there is a period $m\cdot\ell$ Newton ray $\gamma_H$ that lands at a repelling fixed point of $N_p^{m}$ in $H$.  Denote by $\tilde{\gamma}_H$ all rays in $N_p^{-1}(N_p(\gamma_H))$ that land on $H$.  We define 
\[
\Upsilon(H) = \big[\bigcup_{i=0}^{m\ell-1} N_p^{i}(H\cup\tilde{\gamma}_H) \big]\setminus \Delta_\N.
\]
Then $\Upsilon(H)\cup \Delta_\N$ is a connected forward invariant graph that is a union of Newton ray, Newton graph, and Hubbard tree edges.  All edges in the graph are disjoint, except possibly at their endpoints. This construction depends only on the tree $H(q)$, namely for any $p\in Q\cap H(q)$, one has $\Upsilon(H(p))=\Upsilon(H(q))$.

\textbf{Pre-periodic Hubbard trees:}
Now let $H'$ be a tree from Remark \ref{Rmk:HubFromRenorm} that is not periodic but $H=N_p(H')$ is a periodic Hubbard tree. Denote by $\gamma^{\infty}_H$ the extension of $\gamma_H$ by a simple path in $\Delta_\N$ ending at $\infty$. From each component of  $N_p^{-1}(\gamma^{\infty}_H)$ that lands at $H'$, extract a simple ray connecting $H'$ to the Newton graph (recall that all poles are contained in $\Delta_\N$ so each component of $N_p^{-1}(\gamma^{\infty}_H)$ will contain such a ray). Let $\tilde{\gamma}(H')$ be the union of all such simple rays landing at $H'$.  
Each such ray is evidently a pre-periodic Newton ray, and the rays may only intersect at endpoints since they were constructed by lifting. We define

\[
\Upsilon(H') := H' \cup \tilde{\gamma}_{H'}
\]
where it is evident from the construction that $N_p(\Upsilon(H'))=\Upsilon(H)$. Continuing inductively, define $\Upsilon(H')$ for all pre-periodic $H'$ that intersect $Q$.

\textbf{Construction of the graph:}
The graph satisfying the conclusion of the theorem is 
\begin{equation}\label{eqn:ExtendedNewtonGraph}
\Delta^*_\N = \Delta_\N \cup \bigcup_{q \in Q}
\Upsilon(H(q))
\end{equation}
where the notation $H(q)$ for periodic and preperiodic Hubbard trees was defined in Remark \ref{Rmk:HubFromRenorm}.

As constructed, an extended Newton graph $\Delta^*_\N$ with Newton ray edges may have infinitely many vertices since some Newton rays are composed of a sequence of infinitely many preimages of edges.  We now alter the edge set and vertex set of $\Delta^*_\N$ to produce a finite graph without changing the topology of $\Delta^*_\N$ as a subset of $\S^2$.  Each (periodic and pre-periodic) Newton ray is taken as a single edge, thereby eliminating all of the vertices in the Newton ray except its endpoints.  For convenience, we still denote this new finite graph by $\Delta^*_\N$.  Thus the vertices of $\Delta^*_\N$ are the vertices of $\Delta_\N$, the vertices of the Hubbard trees (which are chosen to include repelling fixed points of the polynomial-like restrictions and postcritical points of $N_p$ in the filled Julia sets), and points in the Hubbard tree preimages which map to these vertices.  This graph is finite, connected, and contains the whole postcritical set of $N_p$. Moreover, every edge of $\Delta^*_\N$ is evidently mapped by $N_p$ in the required way.
\end{proof}

\begin{definition}[Extended Newton graph]\label{defn_extendedNewtonGraph}
We call the pair $(\Delta^*_\N, N_p|_{\Delta^*_\N})$ from Equation \ref{eqn:ExtendedNewtonGraph} an \emph{extended Newton graph} associated to $N_p$.
\end{definition}

The following proposition asserts that the extended Newton graph assigned to a Newton map is unique on the Newton graph and Hubbard tree edges (though of course uniqueness is not expected for the Newton rays).  It is a consequence of Corollary \ref{Rem_ExtHubbardTrees} and the construction.

\begin{proposition}
 Let $(\Delta_{\N,1}^*,N_p)$ and $(\Delta_{\N,2}^*,N_p)$ be two extended Newton graphs constructed for $N_p$, and denote by $\Delta_{\N,1}^-$ and $\Delta_{\N,2}^-$ the respective graphs with all Newton ray edges removed.  Then $\Delta_{\N,1}^-=\Delta_{\N,2}^-$ and $N_p|_{\Delta_{\N,1}^-}=N_p|_{\Delta_{\N,2}^-}$.
\end{proposition}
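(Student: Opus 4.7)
The plan is to show that each of the three non-Newton-ray ingredients in the construction of $\Delta_N^*$ is canonically determined by $N_p$; once this is established, the graph $\Delta_N^-$ and the restriction $N_p|_{\Delta_N^-}$ must coincide for any two constructions.

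First I would verify that the Newton graph portion $\Delta_N$ depends only on $N_p$. The channel diagram $\Delta$ is intrinsically defined in Section \ref{Sec_NewtonGraph} from the invariant accesses in the immediate basins, and the level $N$ in Definition \ref{Def_TheNewtonGraphPCFMap} is specified as the minimal integer satisfying two explicit conditions depending only on $N_p$ (separation of distinct extended Hubbard trees by components of $\Delta_N$, and containment of all eventually-fixed critical points in $\Delta_N$). Hence $\Delta_N$ is the same in both constructions.

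Next I would treat the periodic Hubbard tree pieces. For any periodic postcritical orbit whose period exceeds one, Lemma \ref{lem:RenormalizationDomains} and Lemma \ref{Lemma:Renormalization} produce a polynomial-like restriction whose filled Julia set depends only on $N_p$ (not on the perturbation $\widehat{U}_k$) by Theorem \ref{Thm:RenormUnique}; Proposition \ref{Rem_ExtHubbardTrees} then states explicitly that the extended Hubbard tree $H(U_k)$ is independent of the choice of $\widehat{U}_k$. The orbit union $\bigcup_{i=0}^{m(k)-1} N_p^i(H(U_k))$ entering $\Upsilon(w_k)$ is therefore canonical, and in particular the choice of representative point of the periodic orbit in the set $P$ is irrelevant because shifting the representative by $N_p$ merely cyclically permutes the pieces of the orbit union. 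This step is the main conceptual point, but Proposition \ref{Rem_ExtHubbardTrees} does essentially all of the work.

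For the preperiodic Hubbard tree preimages, each set $H^i(U_r)$ is defined in the proof of Theorem \ref{thm:ExtendNewtGraph} as the connected component of $N_p^{i-r}(H(U_r))$ containing $w_j^{(i)}=N_p^i(w_j)$. Since $H(U_r)$, the map $N_p$, and the postcritical point $w_j$ are all canonical objects depending only on $N_p$, so is $H^i(U_r)$. Putting these three observations together, $\Delta_{N,1}^-$ and $\Delta_{N,2}^-$ are identical subsets of $\widehat{\mathbb{C}}$. Finally, since the dynamics on both graphs is the restriction of the same rational map, the equality $N_p|_{\Delta_{N,1}^-}=N_p|_{\Delta_{N,2}^-}$ is automatic; the only minor bookkeeping is to check that the vertex sets declared on $\Delta_N^-$ in Definition \ref{defn_extendedNewtonGraph} (vertices of $\Delta_N$, together with vertices of the Hubbard trees and their chosen preimages) are themselves canonical, which again follows from the canonical nature of $H(U_k)$ and of $N_p^{-1}$ acting on these marked points.
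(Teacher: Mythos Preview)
Your proposal is correct and follows exactly the approach the paper intends: the paper's own justification is the single sentence ``It is a consequence of Proposition \ref{Rem_ExtHubbardTrees} and the construction,'' and your argument is precisely an unpacking of that sentence into its three constituent pieces (canonicity of $\Delta_N$ via Definition \ref{Def_TheNewtonGraphPCFMap}, canonicity of the periodic trees via Proposition \ref{Rem_ExtHubbardTrees}, and canonicity of the preperiodic preimages as components of $N_p^{i-r}(H(U_r))$). Your added remarks about the irrelevance of the choice of orbit representative in $P$ and the canonicity of the vertex set are accurate bookkeeping that the paper leaves implicit.
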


\subsection{Example}\label{Subsec_ExamplesExtNewtGraphs}
There is a postcritically finite Newton map $N_p$ associated to a monic polynomial $p$ whose roots are given approximately by
\[a_1=1, a_2=-1,a_3=-0.0094672882+.3728674604i,\] \[a_4=-0.0094672882-.3728674604i\]
that satisfies the following: the roots of $p$ are simple critical points of $N_p$, and $N_p$ has two additional real critical points at $z\approx 0.3740835220,-.3835508102$ lying in a two cycle and a four cycle respectively. Figure \ref{Fig_Deg4ExtNewtonGraph1} displays the dynamical plane of $N_p$.

\begin{figure}[h!]
\begin{center}
\setlength{\unitlength}{1cm}
\begin{picture}(14,14)
\includegraphics[width=14cm]{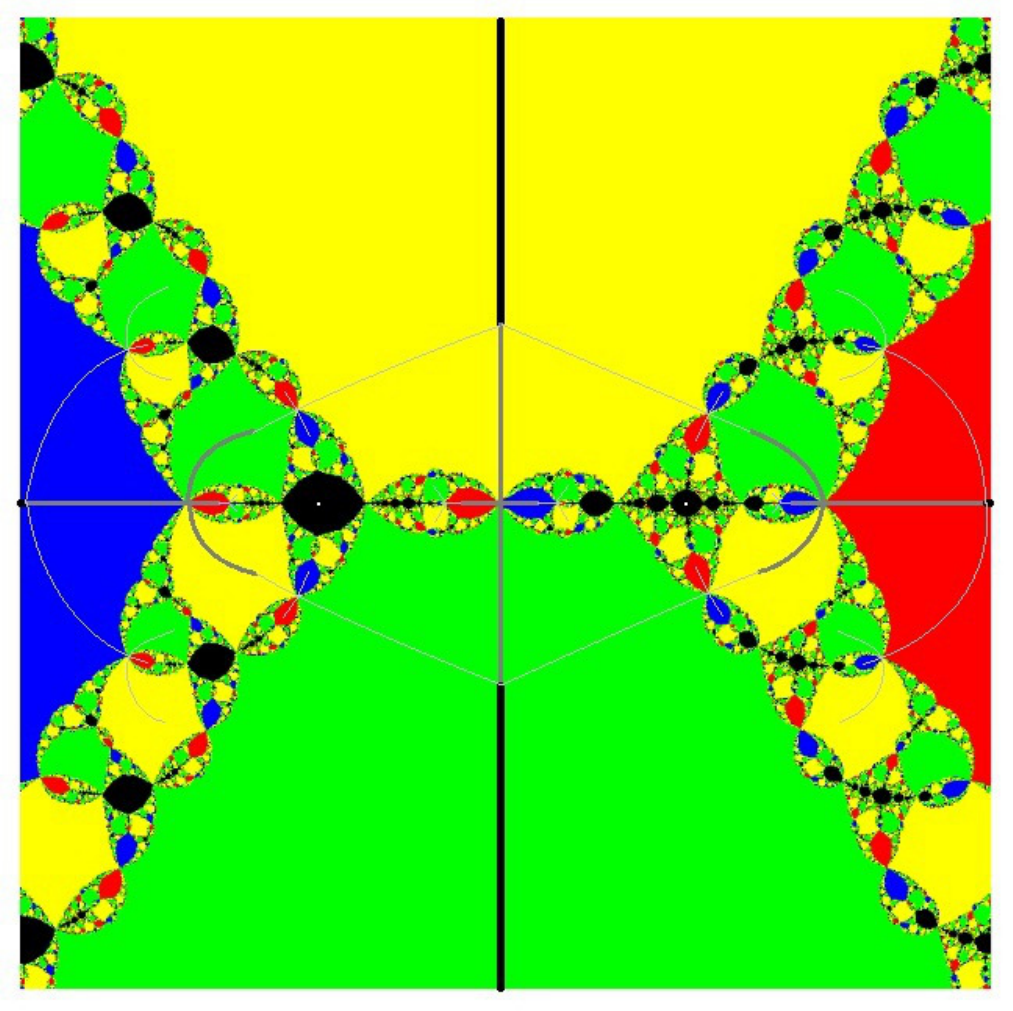}
\end{picture}
\caption{\label{Fig_Deg4ExtNewtonGraph1} Part of the dynamical plane for $N_p$ from Section \ref{Subsec_ExamplesExtNewtGraphs}.  The largest colored regions are the immediate basins of the roots.  The channel diagram is indicated by black lines.  The edges in $\Delta_1\setminus\Delta$ are indicated by thinner lines with a lighter shade, and the edges of $\Delta_2\setminus\Delta_1$ are indicated by even thinner and lighter lines.  The union of all three kinds of lines indicates the Newton graph.  Two white dots in the black filled Julia set indicate free critical points (image by K. Mamayusupov)}
\end{center}
\end{figure}

\begin{figure}[h!]
\begin{center}
\setlength{\unitlength}{1cm}
\begin{picture}(13,6)
\includegraphics[width=13cm]{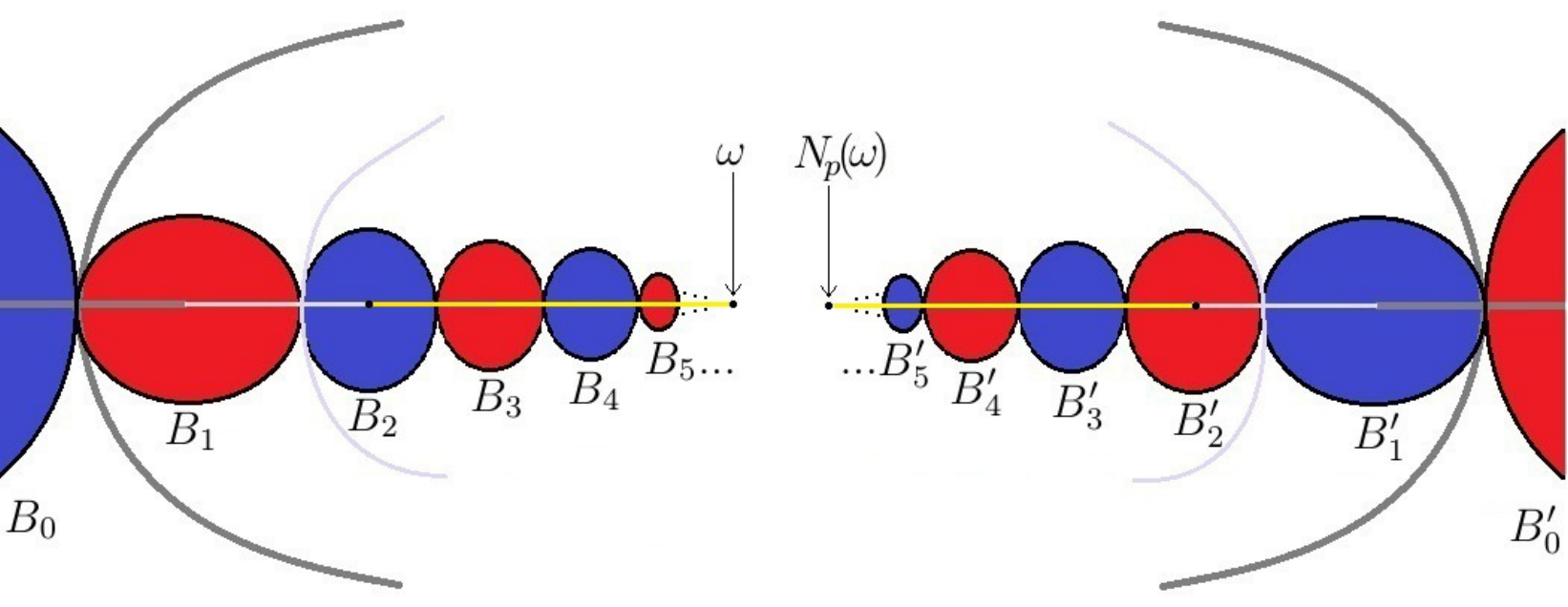}
\end{picture}
\caption{\label{Fig_NewtonDeg4BubbleRays} A topological model of the bubble and Newton rays for  landing at $\omega=N_p^2(\omega)$ and $N_p(\omega)$.  The bubbles $B_0$ and $B'_0$ correspond to the immediate basins of roots, and the dark grey edges correspond to edges in the Newton graph.  The light grey edges are the Newton rays, which have period 2.  For $i\neq 0$, $N_p(B_i)=B'_{i-1}$ and $N_p(B'_i)=B_{i-1}$}
\end{center}
\end{figure}

Since the polynomial $p$ has real coefficients, it is evident that $N_p$ must have a $z\mapsto \overline{z}$ symmetry.

The Newton graph of $N_p$ is taken to be the Newton graph of level two (see Definition \ref{Def_TheNewtonGraphPCFMap}).  

Renormalization (in the sense of Section \ref{Sec_RenormalizationNewton}) at either of the free simple critical points yields a degree four polynomial-like map.  The corresponding filled Julia sets each contain a simple critical point of $N_p$ and are otherwise mapped 2:1 onto each other by $N_p$.  The first-return map in the left-hand filled Julia set (which contains the critical point $-.3835508102$) has three fixed points.  The fixed points are given approximately by:
\begin{itemize}
\item the point $-.3835508102$ indicated by a white dot in the figure.
\item the left most endpoint of the filled Julia set $\omega\approx-0.5531911255$.
\item the unique point in the filled Julia set that lies in the closure of the immediate basins of the Newton map which contain non-real roots.
\end{itemize}

It is obvious how one might construct a periodic Newton ray for this last point: there is a period 2 ray in the yellow basin as well as a period 2 ray in the green basin that would suffice. In the interest of illustrating a more complex case,  we choose to connect $\omega$ and $N_p(\omega)$ to the Newton graph by periodic Newton rays that intersect infinitely many bubbles (see Figure \ref{Fig_NewtonDeg4BubbleRays}).  Denote by $B_0$ the immediate basin of the negative real root of $p$, and denote by $B'_0$ the immediate basin of the positive root.  Let $B_1$ be the unique preimage of $B'_0$ that is not an immediate basin and is adjacent to $B_0$, and define $B'_1$ similarly.  Inductively define $B_i$ to be the unique preimage of $B'_{i-1}$ that is adjacent to $B_{i-1}$, and define $B'_i$ similarly.  Note that for $i\neq 0$, we have $N_p(B_i)=B'_{i-1}$ and $N_p(B'_i)=B_{i-1}$.  Furthermore, the $B_i$ accumulate on $\omega$, and the $B'_i$ accumulate on $N_p(\omega)$. Let $\mathcal{B},\mathcal{B}'$ denote the bubble ray composed of the $B_i,B_i'$ respectively.  Then note that the corresponding Newton rays $\mathcal{R}(\mathcal{B}), \mathcal{R}(\mathcal{B}')$ form a two-cycle that connect the extended Hubbard trees of the filled Julia sets to the Newton graph.

The extended Newton graph $\Delta^*_\N$ is now defined as follows.  The vertices are the vertices of the Newton graph and the vertices of the two extended Hubbard trees containing the free critical points.  The edges are the edges of the Newton graph, the edges of the two extended Hubbard trees, and the two periodic Newton rays just constructed together with all preimages of the Newton rays that land on the two extended Hubbard trees.


\section{Abstract extended Newton graphs}\label{Sec_NewtMapsGenerateExtNewtGraphs}

In this section we define the abstract axiomatizations that describe the Newton graph together with its extension by Hubbard trees and Newton rays, and then we show that every postcritically finite Newton map indeed has extended Newton graphs that satisfy these axioms, as claimed in Theorem \ref{Thm_NewtMapsGenerateExtNewtGraphs}.  The converse that every abstract extended Newton graph is indeed realized by a postcritically finite map is true; this will be proved in \cite{LMS2}.  

Abstract Newton rays must first be defined. Let $\Gamma$ be a finite connected graph embedded in $\S^2$ and $f: \Gamma \to \Gamma$ a weak graph map.

\begin{definition}
\label{Def_AbstractNewtonRay}
\emph{Let $\mathcal{R}$ be an arc in $\S^2$ whose endpoints are denoted $i(\mathcal{R})$ and $t(\mathcal{R})$. Then $\mathcal{R}$ is called \emph{a periodic abstract Newton ray with respect to $(\Gamma,f)$}} if the following are satisfied: 
\begin{itemize}
\item  $\mathcal{R} \cap \Gamma=\{i(\mathcal{R})\}$.
\item there is a minimal positive integer $m$ so that $\overline{f}^m(\mathcal{R})= \mathcal{R} \cup \mathcal{E}$, where $\mathcal{E}$ is a (possibly empty) subgraph of $\Gamma$.
\end{itemize}
We say that the integer $m$ is \emph{the period} of $\mathcal{R}$, and that $\mathcal{R}$ \emph{lands} at $t(\mathcal{R})$. 
\end{definition}
\begin{definition}
\label{Def_AbstractNewtonRayPrePer}Let $\mathcal{R'}$ be an arc in $\S^2$ whose endpoints are denoted $i(\mathcal{R'})$ and $t(\mathcal{R'})$. Then $\mathcal{R'}$ is called 
a \emph{preperiodic abstract Newton ray with respect to $(\Gamma,f)$} the following hold:
\begin{itemize}
\item $\mathcal{R'}\cap\Gamma=\{i(\mathcal{R'})\}$. 
\item there is a minimal integer $l>0$ such that $\overline{f}^l(\mathcal{R'}) = \mathcal{R} \cup \mathcal{E}$, where $\mathcal{E}$ is a (possibly empty) subgraph of $\Gamma$ and $\mathcal{R}$ is a periodic abstract Newton ray with respect to $(\Gamma,f)$.
\item $\mathcal{R'}$ is not a periodic abstract Newton ray with respect to $(\Gamma,f)$.
\end{itemize}
We say that the integer $l$ is \emph{the preperiod} of $\mathcal{R}'$, and that $\mathcal{R}'$ \emph{lands} at $t(\mathcal{R}')$.\end{definition}

Now we will introduce the concept of an abstract extended Newton graph.  In \cite{LMS2}, this graph will be shown to carry enough information to characterize postcritically finite Newton maps.

\begin{definition}[Abstract extended Newton graph]
\label{Def_AbstractExtNewtGraph} 
Let $\Sigma \subset \S^2$ be a finite connected graph, and let $f:\Sigma \to \Sigma$ be a weak graph map.  A pair $(\Sigma,f)$ is called an \emph{abstract extended Newton graph} if
the following are satisfied:

\begin{enumerate}
\renewcommand{\theenumi}{(\arabic{enumi})}

\item \label{Cond_NewtonGrah} (Abstract Newton graph) There exists a positive integer $\N$ and an abstract Newton graph
$\Gamma$ at level $\N$ so that $\Gamma \subseteq \Sigma$. Furthermore $\N$ is minimal so that condition \ref{Cond_TreesSeparated} holds.

\item \label{Cond_PerHubbardTrees} (Periodic Hubbard trees) There is a finite collection of  (possibly degenerate) minimal abstract extended Hubbard trees $H_i \subset \Sigma$ which are disjoint from $\Gamma$, and for each $H_i$ there is a minimal positive
integer $m_i\geq 2$ called the \emph{period of the tree} such that $f^{m_i}\left(H_i\right)=H_i.$

\item \label{Cond_PrePerHubbardTrees} (Preperiodic trees) There is a finite collection of possibly degenerate trees 
$H'_i \subset \Sigma$ of preperiod $\ell_i$, i.e. there is a minimal positive integer $\ell_i$ so that $f^{\ell_i}(H'_i)$ is a periodic Hubbard tree ($H'_i$ is not necessarily a Hubbard tree).  Furthermore for each $i$, the tree $H'_i$ contains a critical or postcritical point.

\item \label{Cond_TreesSeparated} (Trees separated) Any two different periodic or pre-periodic Hubbard trees lie in different complementary components of $\Gamma$.

\item \label{Cond_NewtonRaysPer}
(Periodic Newton rays) For every periodic abstract extended Hubbard tree $H_i$ of period $m_i$, there is exactly one periodic abstract Newton ray $\mathcal{R}_i$ with respect to $(\Gamma,f)$.  The ray lands at a repelling fixed point $\omega_i\in H_i$ of $f^{m_i}$ and has period $m_i\cdot r_i$ where $r_i$ is the period of any external ray landing at the corresponding fixed point of the polynomial realizing $H_i$.

\item \label{Cond_NewtonRaysPreper}
(Preperiodic Newton rays) For every preperiodic tree $H'_i$, there exists at least one preperiodic abstract Newton ray in $\Sigma$ with respect to $(\Gamma,f)$ connecting a vertex of $H'_i$ to $\Gamma$.

\item \label{Cond_uniqueExtend} (Unique extendability) For every vertex $y\in V(\Sigma)$ and every component $U$ of $\S^2\setminus\Sigma$, the local extension $\tilde{f}$ from Equation (\ref{eqn:localExtension}) is injective on
$\bigcup_{v\in f^{-1}(y)} U_v \cap U$.

\item \label{Cond_DegreeExtGraph} (Topological admissibility) $\sum_{x\in V(\Sigma)} \left(\deg_x f-1\right) = 2d_{\Gamma}-2$, where $d_{\Gamma}$ is the degree of the abstract channel diagram $\Delta \subset \Gamma$.

\item \label{Cond_TypesOfEdges} (Edges and vertices) Every edge in $\Sigma$ must be one of the following three types:
\begin{itemize}
\item Type N: An edge in the abstract Newton graph $\Gamma$ of condition \ref{Cond_NewtonGrah}.
\item Type H: An edge in a periodic or pre-periodic abstract Hubbard tree of condition \ref{Cond_PerHubbardTrees} or \ref{Cond_PrePerHubbardTrees}.
\item Type R: A periodic or pre-periodic abstract Newton ray with respect to $(\Gamma,f)$ from condition \ref{Cond_NewtonRaysPer} or \ref{Cond_NewtonRaysPreper}.
\end{itemize}
As a consequence, every vertex of $\Sigma$ is either a Hubbard tree vertex or a Newton graph vertex.

\end{enumerate}

\end{definition}

\begin{remark}[Regular extension]
 The purpose of including condition \ref{Cond_uniqueExtend} is that after $f$ has been upgraded to a graph map following Remark \ref{rem_weakGraphMap}, the hypothesis of Proposition \ref{Prop_RegExt} is met. Thus $f$ has a regular extension $\ol{f}$ which is unique up to Thurston equivalence.
\end{remark}

Now we are going to give the proof of our main theorem which states that an extended Newton graph of a postcritically finite Newton map is indeed an abstract extended Newton graph.

\begin{proof}[Proof of Theorem \ref{Thm_NewtMapsGenerateExtNewtGraphs}]

For a given Newton map $N_p$ consider the extended Newton graph $\Delta^*_\N$ from Definition \ref{defn_extendedNewtonGraph}. We show that $(\Delta^*_\N,N_p)$ is an abstract extended Newton graph by verifying all nine conditions of Definition \ref{Def_AbstractExtNewtGraph}.

\ref{Cond_NewtonGrah} Let $\Delta_\N$ be the Newton graph of $N_p$ as in Definition \ref{Def_TheNewtonGraphPCFMap}. Then $(\Delta_\N,N_p)$ satisfies the properties of an abstract Newton graph by Theorem \ref{Thm_NewtonGraph}.  Minimality is immediate.

\ref{Cond_PerHubbardTrees} The periodic extended Hubbard trees constructed in Theorem \ref{thm:ExtendNewtGraph} evidently satisfy the properties of abstract extended Hubbard trees (Theorem \ref{Thm_ExtendedHubbardTreesBijectiveCorr}). Corollary \ref{Rem_ExtHubbardTrees} states that there is no common vertex with the Newton graph.

\ref{Cond_PrePerHubbardTrees} Also by construction of $\Delta^*_\N$, the preperiodic trees are preimages of periodic Hubbard trees under iterates of $N_p$. Since periodic Hubbard trees may not intersect the Newton graph, the preimage trees may have no common vertex with the Newton graph.

\ref{Cond_TreesSeparated} The existence of a level of the Newton graph so that trees are separated is due to Corollary \ref{Rem_ExtHubbardTrees}.

\ref{Cond_NewtonRaysPer},\,\ref{Cond_NewtonRaysPreper} Every periodic Newton ray (see Definition \ref{Def_NewtonRay}) is easily shown to be a periodic abstract Newton ray, and the corresponding statement holds for preperiodic rays.  The rest of the properties follow immediately from the construction.

\ref{Cond_uniqueExtend} The level of the Newton graph $\N$ was chosen so that $\Delta_\N$ contains the poles and eventually fixed critical points of $N_p$.  Clearly $\Delta_\N=N_p^{-1}(\Delta_{\N-1})$, and so $N_p$ is injective on the complementary components of $\Delta_\N$ that contain no free critical points. Now let $U$ be a complementary component of $\Delta_\N$ that contains at least one critical point of $N_p$. All critical points in $U$ must be free, and by the construction in Theorem \ref{thm:ExtendNewtGraph} are contained in a single Hubbard tree or a preperiodic tree which we denote $H$. Recalling that $\tilde{\gamma}_H=N_p^{-1}(N_p(\gamma_H))$, it is clear that $N_p$ is injective on complementary components of $U\setminus\tilde{\gamma}_H$. The rays in $\tilde{\gamma}_H$ are also edges in $\Delta^*_\N$. We conclude that $N_p$ is injective on all complementary components of $\Delta^*_\N$, and so the condition is satisfied.

\ref{Cond_DegreeExtGraph} Since the degree of $N_p$ equals the degree of its channel diagram, the conclusion follows from the Riemann-Hurwitz formula.

\ref{Cond_TypesOfEdges}  By construction, every edge of $\Delta^*_\N$ is either type N, H, or R.   We show that the edges of each type may intersect only over vertices.  By Corollary \ref{Rem_ExtHubbardTrees}, type H edges may not intersect type N edges, and by construction the intersections with other type H edges may only be over vertices.  It follows from Remark \ref{Rem_RaysDisjointFromHubbardTrees} that the interiors of edges of type H are also disjoint from edges of type R. By Definition \ref{Def_NewtonRay}, the edges of type N and edges of type R can only intersect at vertices of $\Delta_\N$.   Finally, given a type R edge $\gamma$ in a complementary component $U$ of the Newton graph, the only other type R edges constructed in $U$ are the Newton rays in $N_p^{-1}(N_p(\gamma))$. These rays clearly have pairwise disjoint interiors since $\gamma$ contains no critical value in its interior. 
\end{proof}

\section{Conclusion}

We have shown how to extract a graph from any postcritically finite Newton map that satisfies the defining properties of an abstract extended Newton graph.  In \cite{LMS2}, it will be shown that every abstract extended Newton graph is realized by a postcritically finite Newton map.  An equivalence relation will be placed on the set of all abstract extended Newton graphs, and it will be shown that there is a bijection between equivalence classes and the postcritically finite Newton maps up to affine conjugacy.  This will complete the combinatorial classification of postcritically finite Newton maps.


\bibliography{main1}
\bibliographystyle{alpha}


\end{document}